\newtheorem{teor}[subsection]{Theorem}
\newtheorem{lema}[subsection]{Lemma}
\theoremstyle{definition}
\theoremstyle{remark}
\def\qdepth{\operatorname{hdepth}}
\def\hdepth{\operatorname{hdepth}}
\numberwithin{equation}{section}
\begin{document}

\title[Comparing Hilbert depth of $I$ with Hilbert depth of $S/I$. II]{Comparing Hilbert depth of $I$ with Hilbert depth of $S/I$. II}
\author[Andreea I.\ Bordianu, Mircea Cimpoea\c s 
       ]
  {Andreea I.\ Bordianu$^1$ and Mircea Cimpoea\c s$^2$
	}
\date{}

\keywords{Depth, Hilbert depth, monomial ideal, squarefree monomial ideal}

\subjclass[2020]{05A18, 06A07, 13C15, 13P10, 13F20}

\footnotetext[1]{ \emph{Andreea I.\ Bordianu}, University Politehnica of Bucharest, Faculty of
Applied Sciences, 
Bucharest, 060042, E-mail: andreea.bordianu@stud.fsa.upb.ro}
\footnotetext[2]{ \emph{Mircea Cimpoea\c s}, University Politehnica of Bucharest, Faculty of
Applied Sciences, 
Bucharest, 060042, Romania and Simion Stoilow Institute of Mathematics, Research unit 5, P.O.Box 1-764,
Bucharest 014700, Romania, E-mail: mircea.cimpoeas@upb.ro,\;mircea.cimpoeas@imar.ro}

\begin{abstract}
Let $I$ be a squarefree monomial ideal of $S=K[x_1,\ldots,x_n]$. We prove that if $\hdepth(S/I)\leq 6$ of $n\leq 9$
then $\hdepth(I)\geq \hdepth(S/I)$, giving a positive answer to a problem putted in \cite{bordi}.
\end{abstract}

\maketitle

\section{Introduction}

Let $K$ be a field and $S=K[x_1,\ldots,x_n]$ the polynomial ring over $K$.
Let $M$ be a finitely generated graded $S$-module. The Hilbert depth of $M$, denoted by $\hdepth(M)$, is the 
maximal depth of a finitely generated graded $S$-module $N$ with the same Hilbert series as $M$.
For the basic properties of this invariant we refer the reader to \cite{bruns,uli}.

Let $0\subset I\subsetneq J\subset S$ be two squarefree monomial ideals.
In \cite{lucrare2} we presented a new method of computing the Hilbert depth of a quotient of $J/I$, as follows:

For all $0\leq j\leq n$, we let $\alpha_j(J/I)$ be the number of squarefree monomials 
$u$ of degree $d$ with $u\in J\setminus I$. For all $0\leq q\leq n$ and $0\leq k\leq q$, we consider the integers
\begin{equation*}\label{betak}
  \beta_k^q(J/I):=\sum_{j=0}^k (-1)^{k-j} \binom{q-j}{k-j} \alpha_j(J/I).
\end{equation*}
In \cite[Theorem 2.4]{lucrare2} we proved that the Hilbert depth of $J/I$ is
$$\qdepth(J/I):=\max\{d\;:\;\beta_k^d(J/I) \geq 0\text{ for all }0\leq k\leq d\}.$$
Using this combinatorial characterization of the Hilbert depth and the Kruskal-Katona theorem, in \cite{bordi} 
we studied connections between $\hdepth(S/I)$ and $\hdepth(I)$. We proved that if $I\subset S$ is a 
squarefree monomial ideal with $\hdepth(S/I)\leq 3$ then
$\hdepth(I)\geq 4$; see \cite[Theorem 3.10]{bordi}. Also, we proved that if $\hdepth(S/I)\leq 5$ then $\hdepth(I)\geq \hdepth(I)$;
see \cite[Theorem 3.15]{bordi} and \cite[Theorem 4.4]{bordi}. We conjectured that if $\hdepth(S/I)\leq 6$ or $n\leq 9$ then 
$\hdepth(I)\geq \hdepth(S/I)$. We give a positive answer to this question; see Theorem \ref{main2} and Theorem \ref{main3}.

\section{Main results}

Let $I\subset S=K[x_1,\ldots,x_n]$ be a squarefree monomial ideal with $\qdepth(S/I)=q\leq n-1$.
For convenience, we denote 
$$\alpha_j=\alpha_j(S/I), 0\leq j\leq n,\text{ and }\beta_k^d=\beta_k^d(S/I),\; 0\leq k\leq d\leq n.$$
First, we recall the following result:

\begin{teor}(see \cite[Theorem 2.2]{bordi})\label{teo1}
The following are equivalent:
\begin{enumerate}
\item[(1)] $I$ is principal.
\item[(2)] $\qdepth(I)=n$.
\item[(3)] $\qdepth(S/I)=n-1$.
\end{enumerate}
\end{teor}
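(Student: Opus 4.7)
The plan is to prove the cyclic chain $(1) \Rightarrow (2) \Rightarrow (3) \Rightarrow (1)$. The first step is immediate: if $I=(u)$ with $\deg u=s$, then $I\cong S(-s)$ as graded $S$-modules, so $I$ is free of rank one; hence $\depth(I)=n$, and therefore $\hdepth(I)=n$.

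For $(2) \Rightarrow (3)$, I would first extract from Pascal's identity $\binom{n-j}{k-j}=\binom{n-1-j}{k-j}+\binom{n-1-j}{k-1-j}$ the general recurrence $\beta_k^{q}(M)=\beta_k^{q-1}(M)-\beta_{k-1}^{q-1}(M)$, valid for any finitely generated $\mathbb{Z}^n$-graded $S$-module $M$. Applied to $M=S/I$ and combined with $\beta_k^n(S/I)=\delta_{k,0}-\beta_k^n(I)$ (which follows from $\alpha_j(S/I)=\binom{n}{j}-\alpha_j(I)$ together with $\hdepth(S)=n$), this yields
\[
\beta_k^{n-1}(S/I)=\beta_{k-1}^{n-1}(S/I)-\beta_k^n(I)\quad\text{for } k\geq 1,
\]
with initial value $\beta_0^{n-1}(S/I)=1$. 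Telescoping gives $\beta_k^{n-1}(S/I)=1-\sum_{i=1}^k\beta_i^n(I)$. The hypothesis $\beta_i^n(I)\geq 0$ makes these partial sums nondecreasing, while $\sum_{i=0}^n\beta_i^n(I)=\alpha_n(I)=1$ (valid for any nonzero squarefree monomial ideal, since $x_1\cdots x_n\in I$) bounds them above by $1$; hence $\beta_k^{n-1}(S/I)\geq 0$ for all $k$. Combined with the trivial upper bound $\hdepth(S/I)\leq\dim(S/I)\leq n-1$, this gives $\hdepth(S/I)=n-1$.

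Finally, for $(3) \Rightarrow (1)$, set $s=\min\{j:\alpha_j(I)>0\}$ and $m=\alpha_s(I)$, the number of minimal generators of $I$ of minimum degree. Using the Vandermonde-type identity $\sum_{j=s}^k(-1)^{k-j}\binom{n-1-j}{k-j}\binom{n-s}{j-s}=1$ for $k\geq s$, a direct substitution into the definition of $\beta_s^{n-1}(S/I)$ yields $\beta_s^{n-1}(S/I)=1-m$; the hypothesis forces $m=1$, so $I$ has a unique minimal generator $u$ of degree $s$. If $I$ had any further minimal generator, let $t>s$ be the next generator degree and $m'\geq 1$ the corresponding count. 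Then $\alpha_j(I)=\binom{n-s}{j-s}$ for $s\leq j<t$ and $\alpha_t(I)=\binom{n-s}{t-s}+m'$ (the $+m'$ accounts for the new generators, which are not multiples of $u$ by minimality), and the analogous substitution gives $\beta_t^{n-1}(S/I)=-m'<0$, contradicting the hypothesis. Hence no such $t$ exists and $I=(u)$ is principal. The main technical obstacle is the bookkeeping for the two combinatorial identities above; once they are recorded carefully, each of the three implications falls out by telescoping and direct substitution.
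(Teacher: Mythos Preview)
Your proof is correct. Note, however, that the present paper does not actually prove this theorem: it is recalled from \cite[Theorem~2.2]{bordi} and stated without proof, so there is no in-paper argument to compare against. Your cyclic argument $(1)\Rightarrow(2)\Rightarrow(3)\Rightarrow(1)$ is self-contained and works: the recurrence $\beta_k^{q}(M)=\beta_k^{q-1}(M)-\beta_{k-1}^{q-1}(M)$ follows from Pascal's rule as you say; the identity $\sum_{i=0}^n\beta_i^n(I)=\alpha_n(I)$ is just the inverse relation $\alpha_k=\sum_{j}\binom{q-j}{k-j}\beta_j^q$ evaluated at $k=q=n$; and the ``Vandermonde-type'' identity you invoke in $(3)\Rightarrow(1)$ is exactly the statement $\beta_K^{N-1}(K[x_1,\dots,x_N])=1$ after the substitution $N=n-s$, $K=k-s$, which in turn follows from your recurrence and $\beta_k^N(K[x_1,\dots,x_N])=\delta_{k,0}$. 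The counting $\alpha_t(I)=\binom{n-s}{t-s}+m'$ is justified because $t$ is the \emph{next} generator degree after $s$, so every degree-$t$ squarefree monomial in $I$ is either a multiple of $u$ or equal to one of the $m'$ new minimal generators, and these two sets are disjoint by minimality. One small remark: instead of citing $\hdepth(S/I)\le\dim(S/I)$ for the upper bound in $(2)\Rightarrow(3)$, you could stay entirely within your setup by observing that since $\beta_0^n(I)=\alpha_0(I)=0$ and $\sum_i\beta_i^n(I)=1$ with all terms nonnegative, some $\beta_k^n(I)>0$ for $k\ge 1$, whence $\beta_k^n(S/I)=-\beta_k^n(I)<0$ and $\hdepth(S/I)<n$.
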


Theorem \ref{teo1} shows that, in order to show that $\qdepth(I)\geq \qdepth(S/I)$, it is safe to assume that
$I$ is not principal. Hence $n\geq q+2$. Another important reduction which can be done is to assume that $I\subset \mathfrak m^2$;
see \cite[Remark 2.4]{bordi}. Hence $\alpha_0=1$ and $\alpha_1=n$.

Let $N$ and $k$ be two positive integers. Then $N$ can be uniquely written as 
$$N=\binom{n_k}{k}+\binom{n_{k-1}}{n_{k-1}}+\cdots+\binom{n_j}{j},\text{ where }n_k>n_{k-1}>\cdots>n_j\geq j\geq 1.$$
As a direct consequence of the Kruskal-Katona Theorem (\cite[Theorem 2.1]{stanley}), we have the following restrictions on $\alpha_j$'s:

\begin{lema}\label{cord}
If $\alpha_k=\binom{n_k}{k} + \binom{n_{k-1}}{k-1}+\cdots+\binom{n_j}{j}$, as above, where $2\leq k\leq n-1$, then
\begin{enumerate}
\item[(1)] $\alpha_{k-1}\geq \binom{n_k}{k-1}+\binom{n_{k-1}}{k-2}+\cdots+\binom{n_j}{j-1}$.
\item[(2)] $\alpha_{k+1}\leq \binom{n_k}{k+1}+\binom{n_{k-1}}{k}+\cdots+\binom{n_j}{j+1}$.
\end{enumerate}
\end{lema}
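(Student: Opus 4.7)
The plan is to interpret $\alpha_k$ as a face number of the Stanley--Reisner simplicial complex $\Delta$ associated to $I$: a squarefree monomial $x_{i_1}\cdots x_{i_k}$ lies in $S\setminus I$ if and only if $\{i_1,\ldots,i_k\}$ is a face of $\Delta$, so $\alpha_k$ counts the $k$-element faces of $\Delta$. Both inequalities then follow from two standard forms of the Kruskal--Katona theorem (\cite[Theorem 2.1]{stanley}), one for the lower shadow and one for the ``upper shade''.

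For (1), let $\mathcal F_k$ denote the family of $k$-element faces of $\Delta$. Since $\Delta$ is closed under taking subsets, the lower shadow $\partial \mathcal F_k$, consisting of all $(k-1)$-subsets of elements of $\mathcal F_k$, is contained in $\mathcal F_{k-1}$, so $\alpha_{k-1}\geq |\partial \mathcal F_k|$. The Kruskal--Katona lower bound on $|\partial \mathcal F_k|$, applied to the cascade expansion of $\alpha_k$, gives termwise exactly the bound in (1).

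For (2), every $(k+1)$-face $T\in \mathcal F_{k+1}$ has all of its $k$-element subsets in $\mathcal F_k$; hence $\mathcal F_{k+1}$ is contained in the family of $(k+1)$-subsets of $[n]$ whose $k$-element subsets all lie in $\mathcal F_k$. The companion form of the Kruskal--Katona theorem---which, by comparison with the initial colex segment of size $\alpha_k$, bounds such a family by $\binom{n_k}{k+1}+\binom{n_{k-1}}{k}+\cdots+\binom{n_j}{j+1}$---then gives (2). The only delicate point is verifying that the cascade condition $n_k>n_{k-1}>\cdots>n_j\geq j\geq 1$ in our setup matches the hypotheses of the Kruskal--Katona statements being invoked, so that the termwise identification of the cascades on both sides is legitimate; this is routine, and once checked the two inequalities follow immediately.
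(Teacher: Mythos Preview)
Your argument is correct and is precisely the intended one: the paper does not give a separate proof but simply introduces the lemma as ``a direct consequence of the Kruskal--Katona Theorem (\cite[Theorem 2.1]{stanley})'', and your interpretation of $\alpha_k$ as the number of $k$-element faces of the Stanley--Reisner complex together with the shadow and upper-shade forms of Kruskal--Katona is exactly how that consequence is obtained. There is nothing to add.
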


Also, we have the following result:

\begin{lema}(See \cite[Lemma 3.13]{bordi} and \cite[Proposition 3.14]{bordi})\label{lem}
Let $I\subset S$ be a proper squarefree monomial ideal with $\qdepth(S/I)=q$.
The following are equivalent:
\begin{enumerate}
\item[(1)] $\qdepth(I)\geq \qdepth(S/I).$
\item[(2)] $\beta_{k}^{q} \leq \binom{n-q+k-1}{k},\text{ for all }3\leq k\leq q$.
\end{enumerate}
\end{lema}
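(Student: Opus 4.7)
The plan is to convert the condition $\hdepth(I)\geq q$ into an upper bound on $\beta_k^q(S/I)$ by exploiting the linear relation between the $\alpha$-invariants of $I$ and of $S/I$.

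I would start from the identity $\alpha_j(I)+\alpha_j(S/I)=\binom{n}{j}$ for $0\leq j\leq n$, which is immediate because a squarefree monomial of degree $j$ either belongs to $I$ or it does not. Substituting into the definition of $\beta_k^q$ and using linearity yields
$$\beta_k^q(I)=\gamma_k-\beta_k^q(S/I),\qquad \gamma_k:=\sum_{j=0}^{k}(-1)^{k-j}\binom{q-j}{k-j}\binom{n}{j}.$$
Hence the condition $\hdepth(I)\geq q$, i.e.\ $\beta_k^q(I)\geq 0$ for all $0\leq k\leq q$, is equivalent to $\beta_k^q(S/I)\leq \gamma_k$ in the same range.

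The next step is to evaluate $\gamma_k$ in closed form. A short generating-function computation gives
$$\sum_{k\geq 0}\gamma_k\,t^k=\sum_{j=0}^{n}\binom{n}{j}t^j(1-t)^{q-j}=(1-t)^q\Bigl(1+\tfrac{t}{1-t}\Bigr)^{n}=(1-t)^{q-n}.$$
By Theorem \ref{teo1} we may assume $I$ is not principal, so $q\leq n-2$; the exponent is negative and the negative binomial expansion yields $\gamma_k=\binom{n-q+k-1}{k}$.

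It remains to verify that the inequalities for $k=0,1,2$ are automatic, so that only $3\leq k\leq q$ carries content. Under the reduction $I\subset \me^2$ one has $\alpha_0=1$ and $\alpha_1=n$, whence $\beta_0^q=1=\binom{n-q-1}{0}$ and $\beta_1^q=n-q=\binom{n-q}{1}$ hold with equality. For $k=2$, a short algebraic simplification of $\binom{n-q+1}{2}-\binom{q}{2}+(q-1)n$ collapses to $\binom{n}{2}$, so $\beta_2^q\leq\binom{n-q+1}{2}$ reduces to the trivial bound $\alpha_2\leq\binom{n}{2}$.

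The entire argument is essentially bookkeeping; the only slightly nonroutine piece is the generating-function identification of $\gamma_k$, which is the main (minor) obstacle. Once this is in hand everything else follows by routine verification of the small-$k$ cases.
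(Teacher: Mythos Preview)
Your argument is correct. The paper does not prove this lemma; it merely recalls it from \cite[Lemma 3.13 and Proposition 3.14]{bordi}, so there is no in-paper proof to compare against. Your route---using the complement identity $\alpha_j(I)+\alpha_j(S/I)=\binom{n}{j}$ to write $\beta_k^q(I)=\gamma_k-\beta_k^q(S/I)$ and then identifying $\gamma_k=\binom{n-q+k-1}{k}$ via the generating function $(1-t)^{q-n}$---is exactly the natural one, and your disposal of the cases $k=0,1,2$ under the standing reductions $I\subset\mathfrak m^2$ and $I$ not principal is clean.

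Two minor remarks. First, the step ``$\hdepth(I)\geq q$ iff $\beta_k^q(I)\geq 0$ for all $0\leq k\leq q$'' implicitly uses the downward monotonicity $\beta_k^{d-1}=\beta_k^d+\beta_{k-1}^{d-1}$, which shows that nonnegativity at level $d$ propagates to all lower levels; this is standard (it is implicit in \cite{lucrare2}) but worth a one-line mention. Second, the reduction to $q\leq n-2$ is in fact unnecessary for the generating-function step: for $q=n-1$ one still has $(1-t)^{-1}=\sum_k t^k$ and $\gamma_k=1=\binom{k}{k}$, so the formula $\gamma_k=\binom{n-q+k-1}{k}$ holds whenever $q\leq n-1$, i.e.\ whenever $I$ is proper.
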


We recall also the following lemma:

\begin{lema}(See \cite[Lemma 4.1]{bordi})\label{b3q}
Let $I\subset S$ be a proper squarefree monomial ideal.
\begin{enumerate}
\item[(1)] If $q=6$ then $\beta_3^6(S/I)\leq \binom{n-4}{3}$.
\item[(2)] If $q=7$ then $\beta_3^7(S/I)\leq \binom{n-5}{3}$.
\end{enumerate}
\end{lema}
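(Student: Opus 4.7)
The plan is to recast the claim combinatorially via the complementary counts $a_j := \binom{n}{j}-\alpha_j$, the number of squarefree monomials of degree $j$ belonging to $I$. Using $\alpha_0=1$, $\alpha_1=n$ together with the identity
\[
\binom{n-q+2}{3} \;=\; \binom{n}{3}-(q-2)\binom{n}{2}+\binom{q-1}{2}n-\binom{q}{3}
\]
(which is $\beta_3^q$ applied formally to $\alpha_j=\binom{n}{j}$, i.e.\ to $S$ itself), a direct computation shows
\[
\binom{n-q+2}{3}-\beta_3^q \;=\; a_3 - (q-2)\,a_2.
\]
Hence the two statements of the lemma are equivalent to $a_3\geq 4a_2$ for $q=6$ and $a_3\geq 5a_2$ for $q=7$.

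Next I would translate to graphs. Let $G$ be the graph on $[n]$ whose edges are the pairs $\{i,j\}$ with $x_ix_j\notin I$; then $|E(G)|=\alpha_2$, so $a_2=\binom{n}{2}-|E(G)|$. Because $I$ is closed under multiplication by variables, any $x_ix_jx_k\notin I$ has all three of its degree-$2$ divisors outside $I$, so $\{i,j,k\}$ must be a triangle of $G$; hence $\alpha_3\leq T(G)$ and $a_3\geq \binom{n}{3}-T(G)$. Writing the cascade $\alpha_2=\binom{n_2}{2}+\binom{n_1}{1}$ with $n_2>n_1\geq 0$, Kruskal--Katona in its triangle form (Lov\'asz) gives $T(G)\leq \binom{n_2}{3}+\binom{n_1}{2}$. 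Combining these two estimates yields
\[
a_3-(q-2)a_2 \;\geq\; g(n)-g(n_2)-\Bigl(\binom{n_1}{2}-(q-2)n_1\Bigr),
\]
where $g(x):=\binom{x}{3}-(q-2)\binom{x}{2}=\frac{x(x-1)(x-3q+4)}{6}$.

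To finish, I would invoke the hypothesis $\qdepth(S/I)=q$ through $\beta_2^q\geq 0$, which rewrites as $\alpha_2\geq (q-1)n-\binom{q}{2}$. This forces the leading cascade parameter $n_2$ to be close to $n$, and in particular into the regime $x\geq 3q-4$ where $g$ is increasing (the threshold being $14$ for $q=6$ and $17$ for $q=7$). The main obstacle is the resulting case analysis: when $n_1\in\{0,1\}$ the bracketed term vanishes and it suffices to verify $g(n)\geq g(n_2)$, which follows from the cascade lower bound on $n_2$; when $n_1\geq 2$ one must also absorb $\binom{n_1}{2}-(q-2)n_1$, using the cascade inequality $n_1<n_2$ together with $\alpha_2\geq (q-1)n-\binom{q}{2}$ to secure the necessary slack. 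The tightest situations occur for $n$ near $3q-4$ and are checked by hand; the cases $q=6$ and $q=7$ are treated in parallel, differing only in the numerical constants.
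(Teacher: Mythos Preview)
The paper does not prove this lemma here; it merely cites \cite[Lemma~4.1]{bordi}, so there is no in-paper argument to compare against. Your reduction is correct and clean: writing $a_j=\binom{n}{j}-\alpha_j$ and using the Vandermonde-type identity for $\binom{n-q+2}{3}$ does yield $\binom{n-q+2}{3}-\beta_3^q=a_3-(q-2)a_2$, so the two assertions become $a_3\ge 4a_2$ (for $q=6$) and $a_3\ge 5a_2$ (for $q=7$). This is a nice reformulation. Note, however, that the graph/triangle detour is superfluous: Lemma~\ref{cord}(2) already gives $\alpha_3\le\binom{n_2}{3}+\binom{n_1}{2}$ directly from the cascade for $\alpha_2$, without passing through $T(G)$; your intermediate inequality $\alpha_3\le T(G)$ followed by Kruskal--Katona for triangles produces the same bound.

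Your strategy---combine the Kruskal--Katona upper bound on $\alpha_3$ with the lower bound $\alpha_2\ge(q-1)n-\binom{q}{2}$ coming from $\beta_2^q\ge 0$, and study $g(x)=\binom{x}{3}-(q-2)\binom{x}{2}$---is exactly the template the present paper uses for Lemmas~\ref{b46}--\ref{b66}, so it is certainly in the right spirit and very likely what \cite{bordi} does as well. Two caveats: (i) your statement that $\beta_2^q\ge 0$ forces $n_2\ge 3q-4$ is false for small $n$ (e.g.\ $q=6$, $n=8$ forces only $n_2=8$), so the monotonicity of $g$ on $[3q-4,\infty)$ is not available there---you do flag this, but the hand checks for $n$ below the threshold are precisely the content of the lemma and are not supplied; (ii) calling the cascade bound ``Lov\'asz'' is a slight misnomer, as Lov\'asz's version is the real-parameter relaxation, not the integer cascade form you actually use. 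As written, the proposal is a sound outline rather than a complete proof.
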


In the following, our aim is to tackle the open problems from \cite{bordi}.

\begin{lema}\label{b46}
With the above notations, we have that $\beta_4^6\leq \binom{n-3}{4}$.
\end{lema}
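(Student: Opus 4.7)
The plan is to recast the bound as $\beta_4^6(I)\geq 0$ and attack it with Kruskal-Katona. Setting $\gamma_j:=\binom{n}{j}-\alpha_j$ (the number of squarefree degree-$j$ monomials of $I$), and using the identity $\binom{n-3}{4}=15-10n+6\binom{n}{2}-3\binom{n}{3}+\binom{n}{4}$ (which is $\beta_4^6$ applied to the full simplex, verified by direct expansion), the inequality $\beta_4^6\leq\binom{n-3}{4}$ is equivalent to
\[
6\gamma_2-3\gamma_3+\gamma_4\geq 0,
\]
with $\gamma_0=\gamma_1=0$ because $I\subset\me^2$.

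The hypothesis $\qdepth(S/I)=6$ provides $\beta_k^6\geq 0$ for $0\leq k\leq 6$, and in particular $n\geq 8$. Among these constraints, $\beta_3^6\geq 0$ rewrites as $\gamma_3\leq 4\gamma_2+\binom{n-4}{3}$, while Lemma \ref{b3q}(1) gives $\gamma_3\geq 4\gamma_2$; so $\gamma_3$ is trapped in a window of width $\binom{n-4}{3}$ above $4\gamma_2$. For the required lower bound on $\gamma_4$, I would apply Lemma \ref{cord}(2) to the Kruskal-Katona expansion $\alpha_3=\binom{a_3}{3}+\binom{a_2}{2}+\binom{a_1}{1}$: it yields $\alpha_4\leq\binom{a_3}{4}+\binom{a_2}{3}+\binom{a_1}{2}$, i.e., the matching lower bound
\[
\gamma_4\;\geq\;\binom{n}{4}-\binom{a_3}{4}-\binom{a_2}{3}-\binom{a_1}{2}.
\]

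The proof then proceeds by case analysis on this K-K expansion. The most delicate case is $\gamma_2=0$, where one needs $\gamma_4\geq 3\gamma_3$ with $\gamma_3\leq\binom{n-4}{3}$; here I would write $\alpha_3=\binom{n}{3}-\gamma_3$ in K-K form and verify the inequality term by term. The main obstacle is exactly this sub-case in the small-$n$ regime $8\leq n\leq 14$: the K-K shadow estimate for $\gamma_4$ is very tight at the extremal value $\gamma_3=\binom{n-4}{3}$ (for $n=8$, for instance, it produces exactly $\gamma_4\geq 14$ against the required $12$), and one must exploit the fact that the bound $\gamma_3\leq\binom{n-4}{3}$ keeps the K-K expansion of $\alpha_3$ short enough for the shadow contributions to beat $3\gamma_3$. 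When $\gamma_2>0$ the $6\gamma_2$ slack, combined with the complementary K-K lower bound $\alpha_2\geq\binom{a_3}{2}+a_2+1$ from Lemma \ref{cord}(1), makes the argument more forgiving once the shadow estimate is in place.
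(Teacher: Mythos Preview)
Your reformulation $\binom{n-3}{4}-\beta_4^6(S/I)=6\gamma_2-3\gamma_3+\gamma_4$ is correct, and your plan---apply Lemma~\ref{cord} to the Kruskal--Katona expansion of $\alpha_3$ to bound $\gamma_4$ from below, feed in the constraints $\beta_k^6\geq0$, and finish by case analysis for small $n$---is exactly the paper's approach (the paper works in the $\alpha_j$ variables and with the auxiliary functions $f,g,h$ given by $\binom{x}{k}-3\binom{x}{k-1}$, but the content is identical). What you have written is only a sketch, however: the paper's argument is a multi-page case-by-case verification over $8\leq n\leq 14$ and over the three possible K--K shapes of $\alpha_3$, and that detailed casework---which you announce but do not carry out---is what actually establishes the bound.
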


\begin{proof}
First note that $n\geq 8$. Since $\beta_2^6\geq 0$ and $\beta_3^6\geq 0$ it follows that
\begin{equation}\label{conditie}
\alpha_2\geq 5(n-3),\;\alpha_3\geq 4\alpha_2-10(n-2)\geq 10(n-4),\;4\alpha_2\leq \alpha_3+10(n-2).
\end{equation}
We consider the functions
$$
f(x)=\binom{x}{4}-3\cdot \binom{x}{3},\; g(x)=\binom{x}{3}-3\cdot \binom{x}{2}\text{ and }
h(x)=\binom{x}{2}-3\cdot \binom{x}{1}.$$
Note that $f(x)=\frac{1}{24}x(x-1)(x-2)(x-15)$ and we have:

\begin{table}[htb]
\centering
\begin{tabular}{|l|l|l|l|l|l|l|l|l|l|l|l|l|l|l|l|l|l|l|l|}
\hline
$x$    & 1 & 2 &  3 & 4   & 5   & 6   & 7   & 8   & 9    & 10   & 11    & 12   & 13   & 14  & 15 & 16  \\ \hline
$f(x)$ & 0 & 0 & -3 & -11 & -25 & -45 & -70 & -98 & -126 & -150 & -165  & -165 & -143 & -91 & 0  & 140 \\ \hline
\end{tabular}
\end{table}
Also, $f$ is decreasing on $[2,11]$ and is increasing on $[13,\infty)$.

We have that $g(x)=\frac{1}{6}x(x-1)(x-11)$ and:
\begin{table}[htb]
\centering
\begin{tabular}{|l|l|l|l|l|l|l|l|l|l|l|l|l|l|l|l|l|l|l|l|l|}
\hline
$x$    & 1 &  2 &  3 &  4  & 5   & 6   & 7   & 8   & 9   &  10 & 11 & 12 & 13 \\ \hline
$g(x)$ & 0 & -3 & -8 & -14 & -20 & -25 & -28 & -28 & -24 & -15 & 0  & 22 & 52 \\ \hline
\end{tabular}
\end{table}

Also, $g$ is decreasing on $[1,7]$ and is increasing on $[8,\infty)$.

We have that $h(x)=\frac{1}{2}x(x-7)$ and:
\begin{table}[htb]
\centering
\begin{tabular}{|l|l|l|l|l|l|l|l|l|l|l|l|l|l|l|l|l|l|}
\hline
$x$    &  1 & 2  & 3  & 4  & 5  &  6 & 7 & 8 & 9 & 10 & 11 & 12\\ \hline
$h(x)$ & -3 & -5 & -6 & -6 & -5 & -3 & 0 & 4 & 9 & 15 & 22 & 30\\ \hline
\end{tabular}
\end{table}

Also, $h$ is increasing on $[3,\infty)$.

We consider several cases:
\begin{enumerate}
\item[(a)] $\alpha_3=\binom{n_3}{3}$. From Lemma \ref{cord} it follows that $\alpha_4\leq \binom{n_3}{4}$ and $\alpha_2\geq \binom{n_3}{2}$.
If $n_3=n$ then $\alpha_2=\binom{n}{2}$ and therefore
$$\beta_4^6 \leq \binom{n}{4}-3\binom{n}{3}+6\binom{n}{2}-10n+15 = \binom{n-3}{4},$$
as required. Hence, we can assume that $n_3<n$. 

Since $\alpha_2\leq \binom{n}{2}$ and $\alpha_4\leq \binom{n_3}{4}$,
a sufficient condition to have $\beta_4^6 \leq \binom{n-3}{4}$ is $f(n_3)\leq f(n)$.
This is clearly satisfied for $n\geq 15$; see the table with values of $f(x)$. 
Hence, we may assume $n\leq 14$. We consider several subcases:
\begin{itemize}
\item $n=14$. From \eqref{conditie} it follows that
$\alpha_3\geq 10(14-4)=100$ and thus $n_3\geq 10$. In particular, we get $f(n_3)\leq f(10)=-150<f(14)=-91$ and we are done.
\item $n=13$. From \eqref{conditie} it follows that $\alpha_3\geq 90$ and thus 
              $n_3\geq 10$ and again we have $f(n_3)\leq f(10)=-150<f(13)=-143$.
\item $n=12$. From \eqref{conditie} it follows that $\alpha_3\geq 80$ and thus $n_3\geq 9$. If $n_3=11$ then
              $f(n_3)=f(11)=f(12)$ and we are done, so we may assume that $n_3\in\{9,10\}$, i.e. $\alpha_3\in\{84,120\}$.
							If $n_3=9$, i.e. $\alpha_3=84$, then from \eqref{conditie} it follows that
							$$\alpha_2\geq 5\cdot 9=45\text{ and }4\alpha_2\leq 184.$$
							Therefore $\alpha_2\in\{45,46\}$. Since $\alpha_4\leq \binom{9}{4}=126$, it follows that
							$$\beta_4^6 \leq 126 - 3\cdot 84 + 6 \cdot 46 - 10\cdot 12 + 15 = 45 \leq 126 = \binom{12-3}{4}.$$
							Similarly, if $n_3=10$, i.e. $\alpha_3=120$, then $4\alpha_2 \leq 220$, that is $\alpha_2\leq 55$.
							Since $\alpha_4\leq \binom{10}{4}=210$, it follows that
							$$\beta_4^6 \leq 210 - 3\cdot 120 + 6 \cdot 55 - 10\cdot 12 + 15 = 75 \leq 126 = \binom{12-3}{4}.$$
							Hence, we are done.
\item $n=11$. From \eqref{conditie} it follows that $\alpha_3\geq 70$	and thus $n_3\geq 9$. If $n_3=9$ ($\alpha_3=84$),
              then from \eqref{conditie} it follows that
							$$\alpha_2\geq 40\text{ and }4\alpha_2\leq 174,\text{ i.e. }\alpha_2\leq 43.$$
							Since $\alpha_4\leq \binom{9}{4}$ it follows that
							$$\beta_4^6 \leq 126-3\cdot 84 + 6\cdot 43 - 110 + 15 = 37 \leq \binom{8}{4}=70,$$
							and we are done. If $n_3=10$ ($\alpha_3=120$) then, from \eqref{conditie}, it follows
							that $4\alpha_2\leq 210$, that is $\alpha_2\leq 52$. Hence
							$$\beta_4^6 \leq 210-3\cdot 120 + 6\cdot 52 - 110 + 15 = 67 \leq \binom{8}{4},$$
							and we are done again.
\item $n=10$. From \eqref{conditie} it follows that $\alpha_3\geq 60$	and thus $n_3=9$ and $\alpha_3=84$. Also $\alpha_4\leq 126$.
              Again, from \eqref{conditie} it follows that $4\alpha_2\leq 164$, i.e. $\alpha_2\leq 41$. Therefore
							$$\beta_4^6 \leq 	126 - 3\cdot 84 + 6\cdot 41 - 100 + 15 = 35 = \binom{10-3}{3},$$
							and we are done.
\item $n=9$. From \eqref{conditie} it follows that $\alpha_3\geq 50$ and thus $n_3=8$ and $\alpha_3=56$. Also, $\alpha_4\leq 70$.
             Again, from \eqref{conditie} it follows that $4\alpha_2\leq 126$ and $\alpha_2\leq 31$. It follows that
						 $$\beta_4^6 \leq  70 - 3\cdot 56 + 6 \cdot 31 - 90 + 15 = 13 \leq \binom{9-3}{4} = 15,$$
						 and we are done.
\item $n=8$. From \eqref{conditie} it follows that $\alpha_3\geq 40 > \binom{7}{3}=35$, a contradiction, since $n_3\leq 7$.					
\end{itemize}							
\item[(b)] $\alpha_3=\binom{n_3}{3}+\binom{n_2}{2}$, where $n>n_3>n_2\geq 2$.
           From Lemma \ref{cord} it follows that 
					 $$\alpha_4\leq \binom{n_3}{4}+\binom{n_2}{3}\text{ and }\alpha_2\geq \binom{n_3}{2}+\binom{n_2}{1}.$$
					 Since $\alpha_2\leq \binom{n}{2}$, in order to have $\beta_4^6\leq \binom{n-3}{4}$ it suffices
					 $f(n_3)+g(n_2)\leq f(n)$. If $n\geq 16$ then
					 $$f(n)-(f(n_3)+g(n_2))\geq f(n)-(f(n-1)+g(n-2))=\frac{1}{2}(n-2)(n-9)>0,$$
					 and we are done. If $n = 15$ then, looking at the tables of values of $f(x)$ and $g(x)$ it is easy to
					 check that $f(n_3)+g(n_2)\leq 0 = f(15)$, so we are done also in this case. Hence, we may assume $n\leq 14$. 
					 We consider several subcases:
					 \begin{itemize}
					 \item $n=14$. As in the case (a), we have $\alpha_3\geq 100$. Thus $n_3\geq \{9,10,11,12,13\}$. It follows that
					       $$f(n_3)+g(n_2)\leq f(13)+g(12)=-126+22=-104<-91=f(14),$$
								 and we are done.
					 \item $n=13$. As in the case (a), we have $\alpha_3\geq 90$. Thus $n_3\in \{9,10,11,12\}$. If $n_3\geq 10$ then					       
					       $$f(n_3)+g(n_2)\leq f(10)+g(11)=-150<-143=f(13),$$
								 and we are done. Now, assume that $n_3=9$. If $n_2\geq 5$ then 
								 $$f(n_3)+g(n_2)\leq f(9)+g(5)=-146<-145=f(13),$$
								 and we are done again. If $n_2=4$ then $\alpha_3=90$. From \eqref{conditie} it follows that
								 $4\alpha_2 \leq \alpha_3 + 110 = 200$, that is $\alpha_2\leq 50$. If follows that
								 $$\beta_4^6 \leq f(9)+g(4) + 6\cdot 50 - 10 \cdot 13 + 15 = 45 < \binom{10}{3}.$$
								 If $n_2\leq 3$ then $\alpha_3\leq 87$, a contradiction.								
					 \item $n=12$. As in the case (a), we have $\alpha_3\geq 80$ and $n_3\in \{9,10,11\}$. If $n_3=11$ then $n_2\leq 10$ and
					       therefore $f(n_3)+g(n_2)\leq f(n_3)=f(11)=f(12)$, as required. If $n_3=9$ then $\alpha_3\leq \binom{9}{3}+\binom{8}{2}=112$.
								 From \eqref{conditie} it follows that $4\alpha_2 \leq 212$, that is $\alpha_2\leq 53$. It follows that
								 \begin{align*}
								 & \beta_4^6 \leq f(n_3)+g(n_2) - 3\cdot 85 + 6\cdot 53 - 10\cdot 12 + 15 = \\
								 & = f(9)+g(n_2)-42 =-166+g(n_2)\leq -166< f(12)=-165, 
								 \end{align*}
								 and we are done.
								 If $n_3=10$ then $n_2\leq 9$. If $n_2\geq 5$ then
								 $$f(n_3)+g(n_2)\leq f(10)+g(5) = -170 < -165 = f(12),$$
								 and we are done. If $n_2\leq 4$ then $121\leq \alpha_3 \leq \binom{10}{3}+\binom{4}{2} = 126$. From \eqref{conditie}
								 it follows that $4\alpha_2 \leq 226$, that is $\alpha_2\leq 56$. Since $\alpha_4\leq \binom{10}{4}+\binom{4}{3}=214$, 
								 it follows that
							   $$\beta_4^6 \leq 214 - 3\cdot 121 + 6 \cdot 56 - 10\cdot 12 + 15 = 82 \leq 126 = \binom{12-3}{4}.$$	
					 \item $n=11$. As in the case (a), we have $\alpha_3\geq 70$. Thus $n_3\in \{8,9,10\}$.								 
					       If $n_3=8$ then $n_2\geq 6$ and $72=\binom{8}{3}+\binom{6}{2}\leq \alpha_3\leq \binom{8}{3}+\binom{7}{2} = 77$.								 
								 In follows that $4\alpha_2\leq 77+90=167$ and thus $\alpha_2\leq 41$. Since $\alpha_4\leq \binom{8}{4}+\binom{7}{3}=112$,
								 we get
								 $$\beta_4^6 \leq 112 - 4\cdot 72 + 6\cdot 41 - 10\cdot 11 + 15 = -25,$$
					       a contradiction.
                 If $n_3=9$ then $85\leq \alpha_3\leq \binom{9}{3}+\binom{8}{2}=112$ and thus
								 $$4\alpha_2\leq 112+90=202,\text{ that is }\alpha_2\leq 50.$$
								 Since $\alpha_4\leq \binom{9}{4}+\binom{8}{3}=182$, it follows that
								 $$\beta_4^6 \leq 182 - 4\cdot 85 + 6\cdot 50 - 10\cdot 11 + 15 = 47 \leq \binom{8}{4}=70.$$
								 If $n_3=10$ and $n_2\geq 5$ then $f(n_3)+g(n_2)\leq -170 < f(11)=-165$ and there is nothing to prove.
								 Hence we may assume $n_2\leq 4$. If $n_2=4$ then $\alpha_3=126$ and, from \eqref{conditie}, it follows
								 that $\alpha_2\leq 54$. Since $\alpha_4-3\alpha_3\leq f(10)+g(4)=-164$, it follows that
								 $$\beta_4^6 \leq - 164+6\cdot 54 - 10\cdot 11 + 15 = 65 < 70=\binom{8}{4}.$$
								 If $n_2\in \{2,3\}$ then $\alpha_3\leq 123$ and therefore $\alpha_2\leq 53$. Since 
								 $\alpha_4-3\alpha_3\leq f(10)+g(2)=-153$, it follows that
								 $$\beta_4^6 \leq - 153+6\cdot 53 - 10\cdot 11 + 15 = 70=\binom{8}{4}.$$														
					 \item $n=10$. As in the case (a), we have $\alpha_3\geq 60$. Thus $n_3\in \{8,9\}$. Assume $n_3=9$.					 
					       If $n_2\in\{6,7\}$ then
					       $f(n_3)+g(n_2)\leq -151 < -150=f(10)$ and we are done. So we may assume $n_2\leq 5$.
								 If $n_2=5$ then $\alpha_3= 94$ and therefore $4\alpha_2 \leq 94+80=174$, i.e $\alpha_2\leq 43$.
								 Since $\alpha_4-3\alpha_3\leq f(n_3)+g(n_2)=f(9)+g(5)=-146$, 
								 it follows that 
								 $$\beta_4^6 \leq -146 +6\cdot 43 - 100 + 15 = 27<35 = \binom{7}{4}.$$
								 If $n_2=4$ then $\alpha_3= 90$ and therefore $\alpha_2\leq 42$. Similarly, we get $\beta_4^6\leq 27$.								 
								 If $n_2\leq 3$ then $\alpha_3\leq 87$ and therefore $\alpha_2\leq 41$. We get 
								 $$\beta_4^6\leq f(9)+g(2)+6\cdot 41 - 100 + 15 =32<\binom{7}{4}.$$
								 Now, assume that $n_3=8$. If $n_2\leq 3$ then $\alpha_3\leq \binom{8}{4}+\binom{3}{2}=59$, a contradiction.
								 Thus $n_2\in\{4,5,6,7\}$. If $n_2=4$ then $\alpha_3=66$ and thus $4\alpha_2\leq 142$, that is $\alpha_2\leq 35$.
								 It follows that $$\beta_4^6 \leq f(9)+g(4) + 6\cdot 35 - 100 + 15 = 3 < \binom{7}{4}.$$
								 If $n_2\geq 5$ then $\alpha_3\leq 77$ and thus $\alpha_2\leq 39$.
								 It follows that $$\beta_4^6 \leq f(9)+g(5) + 6\cdot 39 - 100 + 15 = 31 < \binom{7}{4}.$$
					 \item $n=9$. As in the case (a), we have $\alpha_3\geq 50$ and $n_3 \in \{7,8\}$. Assume that $n_3=8$. If $n_2=7$ then 
					       $$f(n_3)+g(n_2)=f(8)+g(7)=-98-28=-126=f(9),$$
								 and we are done. If $n_2=6$ then $\alpha_3 = \binom{8}{3}+\binom{6}{2}=71$ and $4\alpha_2\leq 141$,
								 that is $\alpha_2\leq 35$. Thus
								 $$\beta_4^6 \leq f(8)+g(6)+6\cdot 35 - 90 + 15 = 12 < \binom{6}{4} = 15,$$
								 and we are done. If $n_2=5$ then $\alpha_3 = \binom{8}{3}+\binom{5}{2}=66$ and $\alpha_2\leq 34$. Thus
								 $$\beta_4^6 \leq f(8)+g(5)+6\cdot 34 - 90 + 15 = 11 < \binom{6}{4} = 15.$$
								 If $n_2=4$ then $\alpha_3=62$ and $\alpha_2 \leq 33$. Thus
								 $$\beta_4^6 \leq f(8)+g(4)+6\cdot 32 - 90 + 15 = 11 < \binom{6}{4} = 15.$$
								 If $n_2=3$ then $\alpha_3=59$ and $\alpha_2\leq 32$. Similarly, we get $\beta_4^6\leq 11$.
								 If $n_2=2$ then $\alpha_3=57$ and $\alpha_2\leq 31$. Similarly, we get $\beta_4^6\leq 10$.								
								 Now, assume that $n_3=7$. It follows that $n_2=6$ and $\alpha_3=50$. Thus $\alpha_2\leq 30$ and
								 $$\beta_4^6 \leq f(7)+g(6)+6\cdot 30 - 90 + 15 = 10<15.$$								
					 \item $n=8$. As in the case (a), we have $\alpha_3\geq 40$. Thus $n_3=7$ and $n_2\in\{4,5,6\}$.
					       If $n_2=4$ then $\alpha_3=41$ and $\alpha_2\leq 25$. It follows that
								 $$\beta_4^6 \leq f(7)+g(4)+6\cdot 25 - 80+15 = 1 \leq \binom{5}{4}=5.$$
								 If $n_2=5$ then $\alpha_3=45$ and $\alpha_2\leq 26$. It follows that
								 $$\beta_4^6 \leq f(7)+g(5)+6\cdot 26 - 80+15 = 1 \leq \binom{5}{4}.$$
								 If $n_2=6$ then $\alpha_3=50$ and $\alpha_2\leq 27$. It follows that
								 $$\beta_4^6 \leq f(7)+g(6)+6\cdot 27 - 80+15 = 2 \leq \binom{5}{4}.$$
					 \end{itemize}

\item[(c)] $\alpha_3=\binom{n_3}{3}+\binom{n_2}{2}+\binom{n_1}{1}$, where $n>n_3>n_2>n_1\geq 1$.
           From Lemma \ref{cord} it follows that 
					 $$\alpha_4\leq \binom{n_3}{4}+\binom{n_2}{3}+\binom{n_1}{2}\text{ and }\alpha_2\geq \binom{n_3}{2}+\binom{n_2}{1}+1.$$
					 Since $\alpha_2\leq \binom{n}{2}$, in order to have $\beta_4^6\leq \binom{n-3}{4}$ it suffices
					 $f(n_3)+g(n_2)+h(n_1)\leq f(n)$. If $n\geq 16$ then 
					 $$f(n)-(f(n_3)+g(n_2)+h(n_1))\geq f(n)-(f(n-1)+g(n-2)+h(n-3))=n-6>0.$$
					 If $n = 15$ then, looking at the tables of values of $f(x)$ and $g(x)$ it is easy to
					 check that $f(n_3)+g(n_2)+h(n_1)\leq 0 = f(15)$, so we are done also in this case. Hence, we may assume $n\leq 14$. 
					 We consider several subcases:
					 \begin{itemize}
					 \item $n=14$. As in the case (b), we have $\alpha_3\geq 100$ and $n_3\geq \{9,10,11,12,13\}$. If $n_3\geq 10$ then
					       $$f(n_3)+g(n_2)+h(n_1)\leq f(13)+g(12)+h(11)=-143+22+22=-99<-91=f(14),$$
								 and we are done.	If $n_3=9$ then $n_2\leq 8$ and $n_1\leq 7$ and thus 
								 $$f(n_3)+g(n_2)+h(n_1)\leq f(9) = -126 < -91=f(14).$$
					 \item $n=13$. As in the case (b), we have $\alpha_3\geq 90$ and $n_3\in \{9,10,11,12\}$.
					       Assume that $n_3=9$. If $n_2\geq 4$ then $$f(n_3)+g(n_2)+h(n_1) \leq f(9)+g(4)+h(1) = -143 = f(13),$$
								 and we are done. If $n_2\leq 3$ then $\alpha_3\leq \binom{9}{3}+\binom{3}{2}+\binom{2}{1}=89$, a contradiction.												
					       If $n_3=10$ then $n_2\leq 9$ and $n_1\leq 8$. In particular, $g(n_2)\leq 0$ and $h(n_3)\leq 4$. Therefore								
					       $$f(n_3)+g(n_2)+h(n_1)\leq f(10)+4=-150+4=-146<-143=f(13),$$
								 and we are done.	Similarly, if $n_3\in\{11,12\}$ then $n_2\leq 11$ and $n_2\leq 10$. Therefore
								 $$f(n_3)+g(n_2)+h(n_1)\leq -165+0+15 = -150 < - 143 =f(13),$$
								 and we are done again.								
					 \item $n=12$. As in the case (b), we have $\alpha_3\geq 80$ and $n_3\in \{8,9,10,11\}$. If $n_3=11$ then 
					       $n_2\leq 10$ and $n_1\leq n_2-1$.
					       If is easy to check that $g(n_2)+h(n_1)\leq 0$ and therefore $$f(n_3)+g(n_2)+h(n_1)\leq f(n_3)=f(11)=f(12),$$ as required. 	
								 Assume $n_3=9$.
								 It follows that $\alpha_3\leq \binom{9}{3}+\binom{8}{2}+\binom{7}{1}=119$. From \eqref{conditie} it follows that
								 $4\alpha_2\leq 219$, that is $\alpha_2\leq 54$. Also, $n_2\leq 8$, $n_1\leq 7$ and $\alpha_3\geq \binom{9}{3}+2=86$.
								 It follows that
								 \begin{align*}
								 & \beta_4^6 \leq f(n_3)+g(n_2)+h(n_1) - 3\cdot 86 + 6\cdot 54 - 10\cdot 12 + 15 = \\
								 & = f(9)+g(n_2)+h(n_1)-39 =-165+g(n_2)+h(n_1) < -165=f(12),
								 \end{align*}
								 and we are done. If $n_3=10$ then $n_2\leq 9$ and $n_1\leq 8$. As in the case (b), if $n_2\geq 5$ then
								 $$f(n_3)+g(n_2)+h(n_1)\leq f(10)+g(5)+h(8) = -166 < -165 = f(12),$$
								 and we are done. If $n_2\leq 4$ then $n_1\leq 3$ and $122\leq \alpha_3 \leq \binom{10}{3}+\binom{4}{2} + \binom{3}{1} = 129$. From \eqref{conditie}
								 it follows that $4\alpha_2 \leq 229$, that is $\alpha_2\leq 57$. Since 
								 $\alpha_4\leq \binom{10}{4}+\binom{4}{3}+\binom{3}{2}=217$, it follows that
							   $$\beta_4^6 \leq 217 - 3\cdot 122 + 6 \cdot 57 - 10\cdot 12 + 15 = 88 \leq 126 = \binom{12-3}{4}.$$
								 If $n_3=8$ then the condition $\alpha_3\geq 80$ implies $n_2=7$ and $3\leq n_1\leq 6$.
								 In particular, we have $\alpha_3\leq 83$ and thus $\alpha_2\leq 45$. Also, we have 
								 $$\alpha_4-3\alpha_3\leq f(n_3)+g(n_2)+h(n_1) \leq f(8)+g(7)+h(6)=-129.$$
								 If follows that
								 $$\beta_4^6 \leq -129 + 6\cdot 45 - 120 + 15 = 36 \leq \binom{9}{3}=84.$$								 
					 \item $n=11$. As in the case (b), we have $\alpha_3\geq 70$ and $n_3\in \{8,9,10\}$.
                 First, assume that $n_3=8$. As in the case $n=12$ above, it follows that $\alpha_3\leq 83$. We get $\alpha_2\leq 43$.
								 If $n_2=7$ then, as above, we have $\alpha_4-3\alpha_3\leq -129$. Therefore
								 $$\beta_4^6 \leq -129 + 6\cdot 43 - 110 + 15 = 34 \leq \binom{8}{3}=56.$$
								 If $n_2\leq 6$ then $\alpha_3\leq 76$ and $\alpha_2\leq 41$. Since 
								 $$\alpha_4-3\alpha_2\leq f(n_3)+g(n_2)+h(n_1) \leq f(8)+g(2)+h(1)=-104,$$
								 it follows that $$\beta_4^6 \leq -104 + 6\cdot 41 - 110 + 15 = 46 \leq \binom{8}{3}=56.$$								
                 Now, assume $n_3\geq 9$. If $n_3=9$ and $n_2=8$ then 
								 $$113=\binom{9}{3}+\binom{8}{2}+\binom{1}{1} \leq \alpha_3\leq \binom{9}{3}+\binom{8}{2}+\binom{7}{1}=119,$$ 
								 and thus
								 $$4\alpha_2\leq 117+90=209,\text{ that is }\alpha_2\leq 52.$$
								 Since $\alpha_4\leq \binom{9}{4}+\binom{8}{3}+\binom{7}{2}=203$, it follows that
								 $$\beta_4^6 \leq 203 - 4\cdot 113 + 6\cdot 52 - 10\cdot 11 + 15 < 0,$$
								 a contradiction. If $n_3=8$ and $n_2\leq 7$ then $86 \leq \alpha_3 \leq 111$. 
								 It follows that $4\alpha_2\leq 201$ and thus $\alpha_2\leq 50$. On the other hand,
								 we have $\alpha_4\leq \binom{9}{4}+\binom{7}{3}+\binom{6}{2}=176$. Therefore
								 $$\beta_4^6 \leq 176 - 4\cdot 86 + 6\cdot 50 - 10\cdot 11 + 15 = 37<\binom{8}{4}.$$
								 Now, assume $n_3=10$. As in the case (b), we can assume $n_2\leq 4$ and $n_1\leq 3$.
								 Moreover, if $n_2=4$ then 
								 $$f(n_3)+g(n_2)+h(n_1)\leq f(10)+g(4)+h(1) = -167 < -165=f(11),$$
								 and we are done. So we may assume $n_2\leq 3$. It follows that $\alpha_3\leq 125$.
								 Hence, $4\alpha_2\leq 215$ and thus $\alpha_2\leq 53$. Since
								 $$\alpha_4-3\alpha_3\leq f(n_3)+g(n_2)+h(n_1) \leq f(10)+g(2)+h(1) = -156,$$
								 it follows that
								 $$\beta_4^6\leq -156+6\cdot 53 - 10\cdot 11 + 15 = 67 < \binom{8}{4}.$$								
					 \item $n=10$. As in the case (b), we have $\alpha_3\geq 60$ and $n_3\in \{8,9\}$. 
					       First, assume that $n_3=9$. If $n_2\geq 6$ we are done as in the case (b).
					       If $n_2=5$ and $n_1\geq 2$ then $$f(n_3)+g(n_2)+h(n_1)\leq f(9)+g(5)+h(2) = -151 < -150 = f(10),$$
								 and we are done. If $n_2=5$ and $n_1=1$ then $\alpha_3=95$ and $\alpha_2\leq 43$.
								 Since $\alpha_4-3\alpha_3\leq f(n_3)+g(n_2)+h(n_1)=f(9)+g(5)+h(1)=-149$, 
								 it follows that 
								 $$\beta_4^6 \leq -149 +6\cdot 43 - 100 + 15 = 24<35 = \binom{7}{3}.$$
								 If $n_2= 4$ then again $\alpha_2\leq 43$. Since 
								 $\alpha_4-3\alpha_3\leq f(9)+g(4)+h(1)=-143$, we are done using a similar argument.
								 If $n_3\leq 3$ then $\alpha_3\leq 88$ and $\alpha_2\leq 42$. Since 
								 $\alpha_4-3\alpha_3\leq f(9)+g(2)+h(1)=-132$, we are also get the required conclusion.	
															
                 Now, assume that $n_3=8$. It follows that $\alpha_3\leq 83$ and $\alpha_2\leq 40$.
								 If $n_2\geq 5$	then $$\alpha_4-4\alpha_3\leq f(n_3)+g(n_2)+h(n_1)\leq f(8)+g(5)+h(1) = -121$$
								 and therefore $$\beta_4^6 \leq -121 + 6\cdot 40 - 100 + 15 = 34 < 35=\binom{7}{3}.$$
								 If $n_2\leq 4$ then $\alpha_3\leq 65$ and $\alpha_2\leq 36$. It follows that
								 $$\beta_4^6 \leq f(8)+g(2)+h(1) + 6\cdot 36 - 100 + 15 = 27 < \binom{7}{3}.$$
																
					 \item $n=9$. As in the case (b), we have $\alpha_3\geq 50$ and $n_3 \in \{7,8\}$. 
					       Assume $n_3=8$. If $n_2=7$ then we are done
					       with the same argument as in the case (b). Similarly, if $n_2=6$ then 
								 $$f(n_3)+g(n_2)+h(n_1)\leq f(8)+g(6)+h(1) = -126 = f(9).$$
								 If $n_2=5$ then $\alpha_3\leq 70$ and $4\alpha_2\leq 140$, that is $\alpha_2\leq 35$.
								 Hence $$\beta_4^6 \leq f(8)+g(5)+h(1) + 6\cdot 35 - 90 + 15 = 14 < 15= \binom{6}{4}.$$
								 If $n_2=4$ then $\alpha_3\leq 65$ and thus $\alpha_2\leq 33$. Therefore
								 $$\beta_4^6 \leq f(8)+g(4)+h(1) + 6\cdot 33 - 90 + 15 = 8 \leq \binom{6}{4}.$$
								 If $n_2=3$ then $\alpha_3\leq 61$ and thus $\alpha_2\leq 32$. Similarly, we get $\beta_4^6\leq 8$.
								 If $n_2=2$ then $n_1=1$, $\alpha_3=58$ and $\alpha_2\leq 32$. Similarly, we get $\beta_4^6\leq 13$.
								
								 Now, assume that $n_3=7$. It follows that $n_2=6$ and $\alpha_3=50+n_1\leq 55$. Thus $\alpha_2\leq 31$ and
								 $$\beta_4^6 \leq f(7)+g(6)+h(1)+ 6\cdot 31 - 90 + 15 = 13 < 15.$$												
					 \item $n=8$. As in the previous cases, we have $\alpha_3\geq 40$. Thus $n_3=7$ and $n_2\geq 3$.
								 If $n_2=3$ then $n_2=1$, $\alpha_3=40$ and $\alpha_2\leq 25$.
								 It follows that
								 $$\beta_4^6 \leq f(7)+g(3)+h(1)+ 6\cdot 25 - 80+15 = 2 \leq \binom{5}{4}=5.$$								 								 
								 If $n_2=4$ then $\alpha_3 \leq 44$ and $\alpha_2\leq 26$. It follows that
								 $$\beta_4^6 \leq f(7)+g(4)+h(1)+6\cdot 26 - 80+15 = 4 \leq \binom{5}{4}=5.$$								 
								 If $n_2=5$ then $\alpha_3\leq 49$ and $\alpha_2\leq 27$. It follows that
								 $$\beta_4^6 \leq f(7)+g(5)+h(1)+6\cdot 27 - 80+15 = 4 \leq \binom{5}{4}.$$
								 If $n_2=6$ then $f(7)+g(6)+h(1) = -98 = f(8)$ and we are done.
					 \end{itemize}					
\end{enumerate}
\end{proof}

Note that, if $\qdepth(S/I)=7$ then the condition $\beta_4^7 \leq \binom{n-4}{4}$ do not hold in general; 
see Example \cite[Example 3.18]{bordi}.

\begin{lema}\label{b56}
With the above notations, we have that $\beta_5^6\leq \binom{n-2}{5}$.
\end{lema}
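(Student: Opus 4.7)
Expanding the definition, the claim is that
$$\beta_5^6 = \alpha_5 - 2\alpha_4 + 3\alpha_3 - 4\alpha_2 + 5n - 6 \leq \binom{n-2}{5},$$
with equality when $\alpha_j=\binom{n}{j}$. The plan is to mirror the strategy of Lemma \ref{b46}: expand one of the $\alpha_j$'s in its Kruskal--Katona canonical form, propagate to neighboring $\alpha_j$'s via Lemma \ref{cord}, and close using the sign constraints $\beta_k^6\geq 0$ together with the already-established upper bounds from Lemmas \ref{b3q} and \ref{b46}.

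Concretely, I would expand
$$\alpha_4=\binom{n_4}{4}+\binom{n_3}{3}+\binom{n_2}{2}+\binom{n_1}{1}$$
in canonical form (with zero, one, two, or three tail terms), giving four principal cases. Lemma \ref{cord} (2) then bounds $\alpha_5$ above by the corresponding shade, so
$$\alpha_5 - 2\alpha_4 \leq F(n_4)+G(n_3)+H(n_2)+J(n_1),$$
where $F(x)=\binom{x}{5}-2\binom{x}{4}$, $G(x)=\binom{x}{4}-2\binom{x}{3}$, $H(x)=\binom{x}{3}-2\binom{x}{2}$ and $J(x)=\binom{x}{2}-2\binom{x}{1}$. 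A short factorization shows $F(x)=\tfrac{1}{120}x(x-1)(x-2)(x-3)(x-14)$, whose sign and monotonicity pattern mirror those of $f$ in Lemma \ref{b46}; $G,H,J$ should be tabulated analogously. For the remaining piece $3\alpha_3-4\alpha_2$, I would combine the shadow lower bound on $\alpha_3$ from Lemma \ref{cord} (1), the upper bound $\beta_3^6\leq\binom{n-4}{3}$ from Lemma \ref{b3q}, the upper bound $\beta_4^6\leq\binom{n-3}{4}$ from Lemma \ref{b46}, and the linear bounds $\alpha_2\geq 5(n-3)$, $\alpha_3\geq 4\alpha_2-10(n-2)$ coming from $\beta_2^6,\beta_3^6\geq 0$.

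As in Lemma \ref{b46}, the argument then splits into a large-$n$ regime (empirically $n\geq 15$) in which the quintic growth of $F$ forces $F(n_4)+G(n_3)+H(n_2)+J(n_1)\leq F(n)$ by a direct monotonicity comparison against the telescoping maximum $F(n-1)+G(n-2)+H(n-3)+J(n-4)$, and a bounded regime $n\leq 14$ where one enumerates the Kruskal--Katona profiles of $\alpha_4$ compatible with the lower bound $\alpha_4\geq 3\alpha_3-6\alpha_2+10n-15$ and verifies the inequality case by case.

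The principal obstacle is the bookkeeping: the canonical expansion of $\alpha_4$ has four slots instead of the three in Lemma \ref{b46}, and one must apply Lemma \ref{cord} in both directions simultaneously (shade to control $\alpha_5$, shadow to control $\alpha_3$ and $\alpha_2$). The interaction with the independent bound from Lemma \ref{b3q} is not automatic, and in several small-$n$ subcases the linear lower bound $\alpha_3\geq 4\alpha_2-10(n-2)$ will be too weak and must be replaced by the sharper shadow bound. A naive estimate using only $\alpha_j\leq\binom{n}{j}$ and $\beta_k^6\geq 0$ fails already for $n=8$, so the sharpness of the Kruskal--Katona step is genuinely needed throughout.
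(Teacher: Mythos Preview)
Your plan matches the paper's approach in its essentials: expand $\alpha_4$ in Kruskal--Katona canonical form, bound $\alpha_5-2\alpha_4$ via the functions $f_k(x)=\binom{x}{k}-2\binom{x}{k-1}$ (your $F,G,H,J$ are exactly the paper's $f_5,f_4,f_3,f_2$), and split into a large-$n$ regime (the paper's threshold is $n\geq 13$, not $15$) versus a finite case check.

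Two refinements in the paper's argument are worth noting, since without them your case analysis would balloon. First, the paper opens with a preliminary reduction: writing $\beta_5^6=\alpha_5-(2\beta_4^6+3\beta_3^6+4\beta_2^6+5\beta_1^6+6\beta_0^6)$ and using $\beta_k^6\geq 0$ gives $\beta_5^6\leq \alpha_5-(5n-24)$, so one may assume $\alpha_5\geq \binom{n-2}{5}+5n-23$. This forces the leading term in the canonical expansion of $\alpha_4$ to satisfy $n_4\geq n-2$, which cuts the number of subcases drastically. Second, the paper handles the $3\alpha_3-4\alpha_2$ term not via Lemmas \ref{b3q} or \ref{b46} but by a separate, short Kruskal--Katona argument on $\alpha_2$: expanding $\alpha_2$ canonically and applying the shade bound on $\alpha_3$ yields the clean inequality $3\alpha_3-4\alpha_2\leq 3\binom{n}{3}-4\binom{n}{2}$, which completely decouples the $(\alpha_2,\alpha_3)$ contribution from the $(\alpha_4,\alpha_5)$ analysis and reduces the goal to $\alpha_5-2\alpha_4\leq f_5(n)$. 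Your proposed route through the shadow bound on $\alpha_3$ from $\alpha_4$ and the upper bounds on $\beta_3^6,\beta_4^6$ goes in the wrong direction for this term (a lower bound on $\alpha_3$ does not help upper-bound $3\alpha_3-4\alpha_2$), so you would likely end up rediscovering the paper's decoupling trick anyway.
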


\begin{proof}
First, note that
\begin{align*}
& \beta_5^6 = \alpha_5-(2\beta_4^6+3\beta_3^6+4\beta_2^6+5\beta_1^6+6\beta_0^6) \leq \\
& \leq \alpha_5 -(5\beta_1^6+6\beta_0^6)=\alpha_5-(5(n-6)+6) = \alpha_5-(5n-24).
\end{align*}
If $\alpha_5 \leq \binom{n-2}{5} + (5n-24)$ then there is nothing to prove. Hence we may assume that
\begin{equation}\label{necesar}
\alpha_5 \geq \binom{n-2}{5}+5n-23.
\end{equation}
We denote $f_k(x)=\binom{x}{k}-2\binom{x}{k-1}$ for all $2\leq k\leq 5$. We have the following table of values:

\begin{table}[tbh]
\label{Tab:Tcr2}
\begin{tabular}{|l|l|l|l|l|l|l|l|l|l|l|l|l|l|l|l|l|l|}
\hline
$x$      & 1  & 2  & 3  & 4  & 5  & 6  & 7   & 8   &  9  & 10  & 11  & 12  & 13  & 14  \\ \hline
$f_5(x)$ & 0  & 0  & 0  & -2 & -9 & -24& -49 & -84 & -126& -168& -198& -198& -143&  0  \\ \hline
$f_4(x)$ & 0  & 0  & -2 & -7 & -15& -25& -35 & -42 & -42 & -30 & 0   & 55  & 143 & 273 \\ \hline
$f_3(x)$ & 0  & -2 & -5 & -8 & -10& -10& -7  & 0   & 12  & 30  & 55  & 88  & 130 & 182 \\ \hline
$f_2(x)$ & -2 & -3 & -3 & -2 & 0  & 3  & 7   & 12  & 18  & 25  & 33  & 42  & 52  & 63 \\ \hline
\end{tabular}
\end{table}

We claim that
\begin{equation}\label{claimm}
3\alpha_3-4\alpha_2\leq 3\binom{n}{3}-4\binom{n}{2}=\frac{1}{2}n(n-1)(n-6).
\end{equation}
If $\alpha_2=\binom{n}{2}$ then \eqref{claimm} is obviously true. Suppose that $\alpha_2=\binom{n_2}{2}$ for some $n_2<n$.
Then $\alpha_3\leq\binom{n_2}{3}$ and thus, since $n\geq 8$, we get
$$ 3\alpha_3-4\alpha_2\leq \frac{1}{2}n_2(n_2-1)(n_2-6) < \frac{1}{2}n(n-1)(n-6).$$
Similarly, if $\alpha_2=\binom{n_2}{2}+\binom{n_1}{1}$ for some $1\leq n_1<n_2<n$, since $n\geq 8$, we get
\begin{align*}
& 3\alpha_3-4\alpha_2\leq \frac{1}{2}n_2(n_2-1)(n_2-6)+ \frac{1}{2}n_1(3n_1-11) \leq \\
& \leq \frac{1}{2}\left( (n-1)(n-2)(n-7) + (n-2)(3n-17) \right) = \frac{1}{2}(n-2)(n^2-5n-10) = \\
& = \frac{1}{2}n(n-1)(n-6) - (6n-20) <\frac{1}{2}n(n-1)(n-6),
\end{align*}
and thus we proved the claim. Since $\beta_5^6=\alpha_5-2\alpha_4+3\alpha_3-4\alpha_2+5n-6$, 
from \eqref{claimm} it follows that in order to complete the proof, it suffices to show that 
$\alpha_5-2\alpha_4\leq f_5(n)$. If $\alpha_4=\binom{n}{4}$ this is obviously true, so we can dismiss this case.

Since $\qdepth(S/I)=6$ it follows that:
$$\beta_2^6=\alpha_2-5n+15\geq 0,\;\beta_3^6=\alpha_3-4\alpha_2+10n-20\geq 0.$$
As in the proof of Lemma \ref{b46}, we deduce that
\begin{equation}\label{cond1}
\alpha_2\geq 5(n-3),\;\alpha_3\geq 4\alpha_2-10(n-2)\geq 10(n-4),\;4\alpha_2\leq \alpha_3+10(n-2).
\end{equation}
Moreover, since $\beta_4^6=\alpha_4-3\alpha_3+6\alpha_2-10n+15\geq 0$ and using \eqref{cond1}, we deduce that
\begin{equation}\label{cond2}
\alpha_4\geq 3\alpha_3-6\alpha_2+10n-15, \; 3\alpha_3\leq \alpha_4+20n-75\text{ and }\alpha_4\geq 10n-45.
\end{equation}
We consider several cases:
\begin{enumerate}
\item[(a)] $\alpha_4=\binom{n_4}{4}$ with $n_4<n$. Note that $\alpha_5-2\alpha_4\leq f_5(n_4)$.
           From \eqref{necesar} it follows that $\alpha_4 >\binom{n-2}{4}$ and thus $n_4=n-1$. 					
           If $n\geq 12$ then $f_5(n_4)=f_5(n-1)\leq f_5(n)$ and we are done. Hence, we may assume that $n\leq 11$.									
           We consider several subcases:
					 \begin{itemize}
					 \item $n=11$. Since $\alpha_4=\binom{10}{4}=210$, from \eqref{cond2} it follows that $3\alpha_3 \leq 210 + 220- 75 = 355$ 
					               and thus $\alpha_3\leq 118$. On the other hand, $\alpha_3\geq \binom{10}{3}=120$, a contradiction.																								
					 \item $n=10$. Since $\alpha_4=\binom{9}{4}=126$, from \eqref{cond2} it follows that
					               $\alpha_3\leq 83$. On the other hand, $\alpha_3\geq \binom{9}{3}=84$, a contradiction.																	
					 \item $n=9$. We have $\alpha_4=70$. From \eqref{cond2} it follows that $\alpha_3 \leq 58$. 
					              Also $\alpha_2\geq \binom{8}{2}=28$. Therefore
												$$\beta_5^6 \leq f_5(8)+3\cdot 58 - 4\cdot 28 + 45 - 6 = 17<\binom{7}{5}=21.$$
					 \item $n=8$. From \eqref{necesar} it follows that $\alpha_5\geq \binom{6}{5}+5\cdot 8-23 = 23 > \binom{7}{5}$
					              and therefore $\alpha_4>\binom{7}{4}$, a contradiction.
					 \end{itemize}
					
\item[(b)] $\alpha_4=\binom{n_4}{4}+\binom{n_3}{3}$ with $3\leq n_3 < n_4 < n$. From \eqref{necesar} it follows that $n_4\geq n-2$.
           If $n\geq 13$ then
           \begin{align*} 
					 & f_5(n)-(f_5(n_4)+f_4(n_3))\geq f_5(n)-(f_5(n-1)+f_4(n-2))=\\
					 & = \frac{1}{6}(n-2)(n-3)(n-10)\geq 0,
					 \end{align*}
					 and we are done. We consider several subcases:
					 \begin{itemize}
					 \item $n=12$. If $n_4=11$ then $f_5(n_4)+f_4(n_3)\leq f_5(n_4)=f_5(11)=f_5(12)$ and we are done.
					       Assume that $n_4=10$. From \eqref{necesar} it follows that $\alpha_5\geq \binom{10}{5}+37$. Since $\binom{7}{4}<37\leq \binom{8}{4}$,
								 if follow that $n_3\in \{8,9\}$ and thus $f_5(n_4)+f_4(n_3) = -168-42=-210<-198-f_5(12)$.
											
					 \item $n=11$. From \eqref{cond1} we have $\alpha_3\geq 80$ and $\alpha_2\geq 40$.
					       Assume that $n_4=10$. If $n_3\geq 7$ then $f_5(n_4)+f_3(n_3)\leq -168-35=-203<-198=f_5(11)$, and we are done.
								 If $n_3\leq 6$ then 														
								 $\alpha_4\leq 230$ and thus, using \eqref{cond2}, $\alpha_3\leq 125$. Therefore								
								 $$\beta_5^6 \leq f_5(10)+f_4(3)+3\cdot 125 - 4\cdot 40 + 5\cdot 11 - 6 = 94 < \binom{9}{5}=126.$$								
								 If $n_4=9$ then $\alpha_4\leq \binom{9}{4}+\binom{8}{3}$ and thus $\alpha_3\leq 109$. Therefore
								 $$\beta_5^6 \leq f_5(9)+f_4(3)+3\cdot 109 - 4\cdot 40 + 5\cdot 11 - 6 =  90 < \binom{9}{5}=126.$$
								 
					 \item $n=10$. We have $\alpha_3\geq 60$ and $\alpha_2\geq 35$.					
					       If $n_4=9$ and $n_3=8$ then $f_5(n_4)+f_4(n_3)=-168=f_5(10)$ and we are done. If $n_4=9$ and $4\leq n_3\leq 7$
								 then $\alpha_4\leq \binom{9}{4}+\binom{7}{3}$ and $\alpha_3\leq 95$. Therefore
								 $$\beta_5^6 \leq f_5(9)+f_4(4) + 3\cdot 95 - 4\cdot 35+5\cdot 10 - 6=56 = \binom{8}{5}.$$
								 If $n_4=9$ and $n_3=3$ then $\alpha_4=127$ and $\alpha_3\leq 84$. Therefore
								 $$\beta_5^6 \leq f_5(9)+f_4(3) + 3\cdot 84 - 4\cdot 35+5\cdot 10 - 6=28 < \binom{8}{5}.$$
                 Assume $n_4=8$. Since $\alpha_5\geq \binom{8}{5}+27$ and $\binom{6}{4}<27$ it follows that $n_3=7$.
								 Therefore
                 $\alpha_4 = \binom{8}{4}+\binom{7}{3}=105$ and $\alpha_3\leq 76$. It follows that
								 $$\beta_5^6 \leq f_5(8)+f_4(3) + 3\cdot 76 - 4\cdot 35 + 5\cdot 10 - 6 = 45<\binom{8}{5}=56.$$

					 \item  $n=9$. We have $\alpha_3\geq 50$ and $\alpha_2\geq 30$. 
					        Since $\alpha_5\geq \binom{7}{5}+ 22$ and $22>\binom{6}{4}=15$, it follows that $n_4=8$.                  
					        If $n_3\geq 6$ then $\alpha_4\leq 105$ and $\alpha_3\leq 70$. Therefore
									$$\beta_5^6 \leq f_5(8)+f_4(6) + 3\cdot 70 - 4\cdot 30 + 5\cdot 9 - 6 = 20<\binom{7}{5}=21.$$
									If $n_3\leq 5$ then $\alpha_4\leq 80$ and $\alpha_3\leq 61$. Therefore
									$$\beta_5^6 \leq f_5(8)+f_4(3) + 3\cdot 61 - 4\cdot 30 +5\cdot 9 - 6 = 16,$$
									and we are done. 									
					 \item  $n=8$. We have $\alpha_3\geq 40$ and $\alpha_2\geq 25$.
					        Since $\alpha_5\geq \binom{6}{5}+17=23=\binom{7}{5}+\binom{4}{4}+\binom{3}{3}$, it follows
					        that $\alpha_4\geq \binom{7}{4}+\binom{5}{3} = 45$, i.e. $\alpha_4\in\{5,6\}$.										       
					        If $n_3=6$ then $\alpha_4 =55$ and thus $\alpha_3\leq 46$. Hence
									$$\beta_5^6 \leq f_5(7)+f_4(6) +3\cdot 46 - 4\cdot 25 +5\cdot 8 - 6 =-2<0, $$
									a contradiction. If $n_3=5$ then $\alpha_4=45$ and thus $\alpha_3\leq 43$. Hence
									$$\beta_5^6 \leq f_5(7)+f_4(5) +3\cdot 43 - 4\cdot 25 +5\cdot 8 - 6 =-1<0, $$
									again a contradiction.
					 \end{itemize}
\item[(c)] $\alpha_4=\binom{n_4}{4}+\binom{n_3}{3}+\binom{n_2}{2}$ with $2\leq n_2<n_3<n_4<n$. From \eqref{necesar} it follows that $n_4\geq n-2$.
           If $n\geq 13$ then
           \begin{align*}
           & f_5(n)-(f_5(n_4)+f_4(n_3)+f_3(n_2))\geq \\
					 &\geq f_5(n)-(f_5(n-1)+f_4(n-2)+f_3(n-3))=\frac{1}{2}(n-3)(n-8)\geq 0,
					 \end{align*}
					 and we are done. 
           We consider several subcases:   
					 \begin{itemize}					 
					 \item $n=12$. We have $\alpha_3\geq 80$ and $\alpha_2\geq 45$. If $n_4=11$ then 
					       $$f_5(n_4)+f_4(n_3)+f_3(n_2)\leq f_5(n_4)-4 = f_5(11)-4 = -202< -198=f_5(12),$$
								 and we are done. Using a similar argument as in the case (b), if $n_4=10$ then $\alpha_4\geq \binom{10}{4}+\binom{7}{3}+\binom{4}{2}$.
								 It follows that $$f_5(n_4)+f_4(n_3)+f_3(n_2)\leq -43-168 = - 211 < -198=f(12).$$
					 \item $n=11$. We have $\alpha_3\geq 70$ and $\alpha_2\geq 40$. If $n_4=10$ and $n_3\geq 7$ then we are done as in the case (b).
					       If $n_4=10$ and $n_3\leq 6$ then $\alpha_4\leq 240$. From \eqref{cond2} it follows that $\alpha_3\leq 128$. Therefore
								  $$\beta_5^6 \leq f_5(10)+f_4(3)+f_3(2)+3\cdot 128 - 4\cdot 40 + 5\cdot 11 - 6 = 101 < \binom{9}{5}=126.$$
								  Since $\alpha_5\geq \binom{9}{5}+37$ it follows that $\alpha_4\geq \binom{9}{4}+\binom{7}{3}+\binom{4}{2}$.
									On the other hand, $\alpha_4\leq \binom{9}{4}+\binom{8}{3}+\binom{7}{2}=203$ and therefore $\alpha_3\leq 116$. Therefore
									$$\beta_5^6 \leq f_5(9) - 43 + 3\cdot 116 - 4\cdot 40 +5\cdot 11 - 6 = 68\leq \binom{9}{5}=126.$$
									
					 \item $n=10$. We have $\alpha_3\geq 60$ and $\alpha_2\geq 35$. Assume $n_4=9$.
					       If $n_3=8$ then we are done as in the case (b). If $6\leq n_3\leq 7$ then
								 $\alpha_4\leq \binom{9}{4}+\binom{7}{3}+\binom{6}{2}=176$ and thus $\alpha_3\leq 100$. Therefore
								 $$\beta_5^6\leq f_5(9)+f_4(6)+3\cdot 100 - 4\cdot 35 + 5\cdot 10 - 6 = 51<\binom{8}{5}=56.$$
								 If $n_3\leq 5$ then $\alpha_4\leq \binom{9}{4}+\binom{5}{3}+\binom{4}{2}=142$ and thus $\alpha_3\leq 89$.
								 Therefore
								 $$\beta_5^6\leq f_5(9)+f_4(3)+f_3(2)+ 3\cdot 89 - 4\cdot 35 + 5\cdot 10 - 6 = 41<\binom{8}{5}=56.$$
								 Now, assume $n_4=8$. Since $\alpha_5\geq \binom{8}{5}+27$, it follows that $\alpha_4\geq \binom{8}{4}+\binom{6}{3}+\binom{5}{2}$.
								 Also, since $\alpha_4\leq \binom{8}{4}+\binom{7}{3}+\binom{6}{2}=120$, it follows that $\alpha_3\leq 81$. Therefore
								 $$\beta_5^6\leq f_5(8)+f_4(6)+f_3(5) + 3\cdot 81 - 4\cdot 35 + 5\cdot 10 - 6 = 40<\binom{8}{5}.$$
								
					 \item  $n=9$. Assume $n_4=8$.					
					        If $n_3\geq 6$ then $\alpha_4\leq \binom{8}{4}+\binom{7}{3}+\binom{6}{2}=120$ and $\alpha_3\leq 75$.
									Also, $\alpha_2\geq \binom{8}{2}+\binom{6}{1}+1=35$. Therefore
									$$\beta_5^6 \leq f_5(8)+f_4(6)+f_3(2) + 3\cdot 75 - 4\cdot 35 + 5\cdot 9 - 6 = 13<\binom{7}{5}=21.$$									
									If $n_3\leq 5$ then $\alpha_4\leq \binom{8}{4}+\binom{5}{3}+\binom{4}{2}=86$ and $\alpha_3\leq 63$. 
									Also, from \eqref{cond1} we have $\alpha_2\geq 30$.
									Therefore
									$$\beta_5^6 \leq f_5(8)+f_4(3)+f_3(2) + 3\cdot 63 - 4\cdot 30 +5\cdot 9 - 6 = 20<21,$$
									and we are done. Assume $n_4=7$. Since $\alpha_5\geq \binom{7}{5}+ 22$ it follows that $n_3=6$ and $n_2=5$.
									Therefore $\alpha_4=65$ and thus $\alpha_3\leq 56$. Hence
									$$\beta_5^6 \leq f_5(7)+f_4(6)+f_3(5) + 3\cdot 56 - 4\cdot 30 +5\cdot 9 - 6 = 3<21.$$
					
					 \item  $n=8$. We have $\alpha_3\geq 40$ and $\alpha_2\geq 25$. Since $\alpha_5\geq \binom{7}{5}+\binom{4}{4}+\binom{3}{3}$
					        it follows that $n_4=7$ and, either $n_3=4$ and $n_2=3$, either $n_3\in \{5,6\}$.
									If $n_3=6$ and $n_2\geq 3$ then $\alpha_4 \leq  \binom{7}{4}+\binom{6}{3}+\binom{5}{2}= 65$. It follows that $\alpha_3\leq 50$.
									Hence
									$$\beta_5^6 \leq f_5(7)+f_4(6)+f_3(3)+3\cdot 50 - 4\cdot 25 +5\cdot 8 - 6 =5 < \binom{6}{5}=6.$$
									If $n_3=6$ and $n_2=2$ then $\alpha_4=56$ and $\alpha_3\leq 47$ and we are done using a similar argument as above.
									
									If $n_3=5$ then $\alpha_4 \leq  \binom{7}{4} + \binom{5}{3}+\binom{4}{2} =51$ and $\alpha_3\leq 45$. It follows that
									$$\beta_5^6 \leq f_5(7) + f_4(5)+f_3(2)+  3\cdot 45-4\cdot 25 + 5\cdot 8 - 6 = 3 <6.$$
									If $n_3=4$ and $n_2=3$ then $\alpha_4=\binom{7}{4}+\binom{4}{3}+\binom{3}{2}=42$ and $\alpha_3\leq 42$. It follows that
									$$\beta_5^6 \leq f_5(7) + f_4(4)+f_3(3)+  3\cdot 42-4\cdot 25 + 5\cdot 8 - 6 = -1<0,$$
									a contradiction.									
					 \end{itemize}
\item[(d)] $\alpha_4=\binom{n_4}{4}+\binom{n_3}{3}+\binom{n_2}{2}+\binom{n_1}{1}$ with $1\leq n_1<n_2<n_3<n_4<n$. From \eqref{necesar} it follows that $n_4\geq n-2$.
           If $n\geq 13$ then
           \begin{align*}
					 & f_5(n)-(f_5(n_4)+f_4(n_3)+f_3(n_2)+f_2(n_1))\geq \\
					 & f_5(n)-(f_5(n-1)+f_4(n-2)+f_3(n-3)+f_2(n-4))=n-6\geq 0,
					 \end{align*}
					 and we are done. We consider several subcases:   
					 \begin{itemize}					 
					 \item $n=12$. We have $\alpha_3\geq 80$ and $\alpha_2\geq 45$. If $n_4=11$ then 
					       $$f_5(n_4)+f_4(n_3)+f_3(n_2)+f_4(n_1)\leq f_5(n_4)-6 = f_5(11)-6 = -204< -198=f_5(12),$$
								 and we are done. Using a similar argument as in the case (b), if $n_4=10$ then $\alpha_4\geq \binom{10}{4}+\binom{7}{3}+\binom{3}{2}+\binom{2}{1}$.
								 It follows that $f_5(n_4)+f_4(n_3)+f_3(n_2)+f_2(n_1)\leq -39-168 = - 207 < -198=f(12)$, as required.								
					 \item $n=11$. We have $\alpha_3\geq 70$ and $\alpha_2\geq 40$. If $n_4=10$ and $n_3\geq 7$ then we are done as in the case (b).
					       If $n_4=10$ and $n_3\leq 6$ then $\alpha_4\leq 244$. From \eqref{cond2} it follows that $\alpha_3\leq 129$. Therefore
								  $$\beta_5^6 \leq f_5(10)+f_4(3)+f_3(2)+f_2(1)+3\cdot 129 - 4\cdot 40 + 5\cdot 11 - 6 = 102 < \binom{9}{5}=126.$$
								  Since $\alpha_5\geq \binom{9}{5}+37$ it follows that $\alpha_4\geq \binom{9}{4}+\binom{7}{3}+\binom{3}{2}+\binom{2}{1}$.
									On the other hand, $\alpha_4\leq \binom{10}{4}-1=209$ and therefore $\alpha_3\leq 118$. Therefore
									$$\beta_5^6 \leq f_5(9) - 43 + 3\cdot 118 - 4\cdot 40 +5\cdot 11 - 6 = 74\leq \binom{9}{5}=126.$$
					
					 \item $n=10$. We have $\alpha_3\geq 60$ and $\alpha_2\geq 35$. Assume $n_4=9$.
					       If $n_3=8$ then we are done as in the case (b). If $6\leq n_3\leq 7$ then
								 $\alpha_4\leq \binom{9}{4}+\binom{7}{3}+\binom{6}{2}+\binom{5}{1}=181$ and thus $\alpha_3\leq 100+2$. Therefore
								 $$\beta_5^6\leq f_5(9)+f_4(6)+f_3(2)+f_3(1)+3\cdot 102 - 4\cdot 35 + 5\cdot 10 - 6 = 55 < \binom{8}{5}=56.$$
								 If $n_3\leq 5$ then $\alpha_4\leq \binom{9}{4}+\binom{5}{3}+\binom{4}{2}+\binom{3}{1}=145$ and thus $\alpha_3\leq 90$.
								 Therefore
								 $$\beta_5^6\leq f_5(9)+f_4(3)+f_3(2)+f_2(1)+ 3\cdot 90 - 4\cdot 35 + 5\cdot 10 - 6 = 42<\binom{8}{5}=56.$$
								 Now, assume $n_4=8$. Since $\alpha_5\geq \binom{8}{5}+27$, it follows that $\alpha_4\geq \binom{8}{4}+\binom{6}{3}+\binom{5}{2}+\binom{2}{1}$.
								 Also, since $\alpha_4\leq \binom{8}{4}+\binom{7}{3}+\binom{6}{2}+\binom{5}{1}=125$, it follows that $\alpha_3\leq 83$. Therefore
								 $$\beta_5^6\leq f_5(8)+f_4(6)+f_3(5)+f_2(2) + 3\cdot 83 - 4\cdot 35 + 5\cdot 10 - 6 = 43<\binom{8}{5}.$$
								
					 \item  $n=9$. We have $\alpha_3\geq 50$ and $\alpha_2\geq 30$.  
					        If $n_3\geq 6$ then $\alpha_4\leq \binom{9}{4}-1=125$ and $\alpha_3\leq 76$.
									Also, $\alpha_2\geq \binom{8}{2}+\binom{6}{1}+1=35$. Therefore
									$$\beta_5^6 \leq f_5(8)+f_4(6)+f_3(2) + f_2(1) + 3\cdot 76 - 4\cdot 35 + 5\cdot 9 - 6 = 15<\binom{7}{5}=21.$$									
									If $n_3\leq 5$ then $\alpha_4\leq \binom{8}{4}+\binom{5}{3}+\binom{4}{2}+\binom{3}{1}=89$ and $\alpha_3\leq 64$. 
									Also, from \eqref{cond1} we have $\alpha_2\geq 30$.
									Therefore
									$$\beta_5^6 \leq f_5(8)+f_4(3)+f_3(2)+f_2(1) + 3\cdot 64 - 4\cdot 30 +5\cdot 9 - 6 = 21,$$
									and we are done. Assume $n_4=7$. Since $\alpha_5\geq \binom{7}{5}+ 22$ it follows that $n_3=6$ and $n_2\geq 4$.
									On the other hand, $\alpha_4\leq \binom{8}{4}-1=69$ and thus $\alpha_3\leq 58$. Hence
									$$\beta_5^6 \leq f_5(7)+f_4(6)+f_3(4)+f_2(1) + 3\cdot 58 - 4\cdot 30 +5\cdot 9 - 6 = 9<21.$$

					 \item  $n=8$. We have $\alpha_3\geq 40$ and $\alpha_2\geq 25$. As in the case (c), we have $n_4=7$ and, either $n_3=4$ and $n_2=3$, 
					        either $n_3\in \{5,6\}$. Assume $n_3=6$ and $n_2\geq 3$.
									Since $\alpha_4\leq \binom{8}{4}-1=69$, it follows that 
									
									If $n_3=6$ and $n_2\geq 3$ then $\alpha_4 \leq  \binom{7}{4}+\binom{6}{3}+\binom{5}{2}= 65$. It follows that $\alpha_3\leq 51$.
									Hence
									$$\beta_5^6 \leq f_5(7)+f_4(6)+f_3(3)+f_2(1)+3\cdot 51 - 4\cdot 25 +5\cdot 8 - 6 =6 =\binom{6}{5}.$$
									If $n_3=6$ and $n_2=2$ then $\alpha_4=57$ and $\alpha_3\leq 47$ and we are done using a similar argument as above.
									
									If $n_3=5$ then $\alpha_4 \leq  \binom{7}{4} + \binom{5}{3}+\binom{4}{2} + \binom{3}{1} =54$ and $\alpha_3\leq 46$. It follows that
									$$\beta_5^6 \leq f_5(7) + f_4(5)+f_3(2)+f_2(1)+  3\cdot 46-4\cdot 25 + 5\cdot 8 - 6 = 4 <6.$$
									If $n_3=4$ and $n_2=3$ then $\alpha_4\leq \binom{7}{4}+\binom{4}{3}+\binom{3}{2}+\binom{2}{1}=44$ and $\alpha_3\leq 43$. It follows that
									$$\beta_5^6 \leq f_5(7) + f_4(4)+f_3(3)+f_2(1)+  3\cdot 44-4\cdot 25 + 5\cdot 8 - 6 = 0.$$									
					 \end{itemize}
\end{enumerate}
Hence, the proof is complete.
\end{proof}

\begin{lema}\label{b66}
With the above notations, we have that $\beta_6^6\leq \binom{n-1}{6}$.
\end{lema}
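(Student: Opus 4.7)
The plan is to adapt the strategy of Lemmas \ref{b46} and \ref{b56}, now case-splitting on the Macaulay (Kruskal--Katona) decomposition of $\alpha_5$. Writing
$$\beta_6^6=\alpha_6-\alpha_5+\alpha_4-\alpha_3+\alpha_2-n+1$$
and using $\binom{n-1}{6}=\sum_{k=0}^{6}(-1)^{6-k}\binom{n}{k}$, the claim is equivalent to $\beta_6^6(I)\geq 0$, i.e.\ $\alpha_3(I)+\alpha_5(I)\leq\alpha_2(I)+\alpha_4(I)+\alpha_6(I)$ with $\alpha_j(I)=\binom{n}{j}-\alpha_j$. As standing necessary conditions from $\beta_k^6\geq 0$ for $k=2,3,4,5$ I keep \eqref{conditie} together with $\alpha_4\geq 3\alpha_3-6\alpha_2+10n-15$ and $\alpha_5\geq 2\alpha_4-3\alpha_3+4\alpha_2-5n+6$; in particular $n\geq 8$.

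The key device is Lemma \ref{cord} applied to $\alpha_5$: writing $\alpha_5=\binom{n_5}{5}+\binom{n_4}{4}+\cdots+\binom{n_j}{j}$ in Macaulay form, part (2) gives $\alpha_6\leq\binom{n_5}{6}+\cdots+\binom{n_j}{j+1}$ while part (1) gives $\alpha_4\geq\binom{n_5}{4}+\cdots+\binom{n_j}{j-1}$. I introduce the auxiliary functions $F_k(x)=\binom{x}{k+1}-\binom{x}{k}=\binom{x}{k}(x-2k-1)/(k+1)$ for $1\leq k\leq 5$, each with a single sign change at $x=2k+1$, whose small values will be tabulated as $f,g,h$ were in Lemma \ref{b46}; then $\alpha_6-\alpha_5\leq F_5(n_5)+\cdots+F_j(n_j)$. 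An analogous Kruskal--Katona monotonicity applied to $\alpha_4$ yields $\alpha_4-\alpha_3\leq\binom{n}{4}-\binom{n}{3}$, and with $\alpha_2\leq\binom{n}{2}$ the target reduces (using the identity $F_5(n)=\binom{n-1}{6}-\binom{n-1}{4}$) to
$$F_5(n_5)+F_4(n_4)+\cdots+F_j(n_j)\leq F_5(n).$$
In the base case $n_5=n$, cascading Kruskal--Katona forces $\alpha_i=\binom{n}{i}$ for $2\leq i\leq 5$ and $\beta_6^6=\alpha_6-\binom{n-1}{5}\leq\binom{n-1}{6}$ is immediate, with equality when $\alpha_6=\binom{n}{6}$ (confirming sharpness).

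For $n_5\leq n-1$, case-split on the length $\ell\in\{1,\dots,5\}$ of the Macaulay decomposition, in parallel to the case analysis of Lemmas \ref{b46} and \ref{b56}. For large $n$ (roughly $n\geq 13$), monotonicity of each $F_k$ on its increasing range shows that the sum is maximized at the longest sequential expansion $(n_5,\dots,n_1)=(n-1,n-2,\dots,n-5)$, and the telescoping identity
$$F_5(n)-\bigl[F_5(n-1)+F_4(n-2)+F_3(n-3)+F_2(n-4)+F_1(n-5)\bigr]=n-6$$
then gives slack $n-6$ in the reduced inequality, closing every case uniformly. For $8\leq n\leq 12$ the coarse reduction is typically too weak (because the sign changes of $F_k$ lie within the admissible range of $n_k$), so each case reduces to a finite enumeration of admissible tuples $(n_5,n_4,\dots)$; infeasible configurations are eliminated using $\beta_5^6\geq 0$ to cap $\alpha_4$ above (given $\alpha_5,\alpha_3,\alpha_2$) and Kruskal--Katona from $\alpha_4$ to bound $\alpha_3$ below, and the rest are closed by a direct numerical estimate of $\beta_6^6$. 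The main obstacle, as in Lemmas \ref{b46} and \ref{b56}, will be this small-$n$ bookkeeping: near the extremal configuration $\alpha_j=\binom{n}{j}$ the margins are thin, and one must simultaneously deploy the upper-shadow bound on $\alpha_6$ via $\alpha_5$, lower bounds on $\alpha_3$ via either $\beta_3^6\geq 0$ or Kruskal--Katona from $\alpha_4$, and the mutual feasibility constraints $\beta_3^6,\beta_4^6,\beta_5^6\geq 0$ to close each sub-case.
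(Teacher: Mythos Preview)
Your approach is essentially the paper's: both reduce the claim to $\alpha_6-\alpha_5\leq F_5(n)$ after establishing $\alpha_4-\alpha_3\leq\binom{n}{4}-\binom{n}{3}$ and $\alpha_2\leq\binom{n}{2}$, then control $\alpha_6-\alpha_5$ via Kruskal--Katona on the Macaulay expansion of $\alpha_5$ using the functions $F_k(x)=\binom{x}{k+1}-\binom{x}{k}$ (the paper's $f_{k+1}$), closing large $n$ with the same telescoping identity and small $n$ by hand.

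The paper gains one simplification worth adopting: before decomposing $\alpha_5$ it disposes of the case $\alpha_6\leq\binom{n-1}{6}$ via the identity $\beta_6^6=\alpha_6-\beta_5^5$ together with $\beta_5^5\geq 0$. Under the remaining hypothesis $\alpha_6\geq\binom{n-1}{6}+\binom{5}{5}$, the lower-shadow bound forces $\alpha_5\geq\binom{n-1}{5}+\binom{5}{4}$, so the leading term of your decomposition is pinned at $n_5=n-1$. This drops the telescoping threshold from your $n\geq 13$ down to $n\geq 11$, and leaves only $n\in\{8,9,10\}$ to treat. For those the paper avoids a full enumeration of tuples: it combines $\beta_5^6\geq 0$ with a Kruskal--Katona bound $\alpha_3-2\alpha_2\leq\binom{n}{3}-2\binom{n}{2}$ to get an explicit inequality $\beta_4^4\leq\frac{1}{2}(\alpha_5+3n)+\frac{1}{12}n(n-1)(n-8)-2$, and then checks only a few ranges of $\alpha_5$ in each case. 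Your plan to cap $\alpha_4$ via $\beta_5^6\geq 0$ and bound $\alpha_3$ from below via Kruskal--Katona encodes the same information but organized around $(n_5,n_4,\dots)$ tuples, which will work but is bulkier.
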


\begin{proof}
Since $\beta_6^6=\alpha_6-\beta_5^5$ and $\beta_5^5\geq 0$, the conclusion follows immediately if $\alpha_6\leq \binom{n-1}{5}$.
 Hence, we may assume that $$\alpha_6\geq \binom{n-1}{6}+1 = \binom{n-1}{6}+\binom{5}{5}.$$
 From Lemma \ref{cord} it follows that 
$$\alpha_j\geq \binom{n-1}{j}+\binom{5}{j-1}\text{ for all }j\in\{2,3,4,5\}.$$

We denote $f_k(x)=\binom{x}{k}-\binom{x}{k-1}$ for all $2\leq k\leq 6$. We have the following table of values:

\begin{table}[tbh]
\begin{tabular}{|l|l|l|l|l|l|l|l|l|l|l|l|}
\hline
$x$      & 1  & 2  & 3  & 4  & 5  & 6  & 7   & 8   & 9  & 10 & 11  \\ \hline
$f_6(x)$ & 0  & 0  & 0  & 0  & -1 & -5 & -14 & -28 & -42& -42& 0   \\ \hline
$f_5(x)$ & 0  & 0  & 0  & -1 & -4 & -9 & -14 & -14 & 0  & 42 & 132 \\ \hline
$f_4(x)$ & 0  & 0  & -1 & -3 & -5 & -5 & 0   & 14  & 42 & 90 & 165 \\ \hline
$f_3(x)$ & 0  & -1 & -2 & -2 & 0  & 5  & 14  & 28  & 48 & 75 & 110 \\ \hline
$f_2(x)$ & -1 & -1 & 0  & 2  & 5  & 9  & 14  & 20  & 27 & 35 & 44  \\ \hline
\end{tabular}
\end{table}

Since $n\geq 8$, from the above table, it follows that
$$f_4(n)-(\alpha_4-\alpha_3) \geq f_4(n)-(f_4(n-1)+f_3(n-2)+f_2(n-3))\geq n-4,$$
and thus $\alpha_4-\alpha_3\leq \binom{n}{4}-\binom{n}{3}$.
In particular, we get
$$\beta_4^4 = \alpha_4-\alpha_3+\alpha_2-n+1 \leq \binom{n-1}{4}.$$
Hence, in order to prove that $\beta_6^6\leq \binom{n-1}{6}$ it suffices to show that 
$$\alpha_6-\alpha_5\leq f_6(n).$$
Note that, if $\alpha_5 = \binom{n-1}{5}+\binom{n_4}{4}$ then $\alpha_6\leq \binom{n-1}{6}+\binom{n_4}{5}$ and therefore
$$\alpha_6-\alpha_5\leq f_6(n-1)+f_5(n_4).$$ 
Similarly, if $\alpha_5 = \binom{n-1}{5}+\binom{n_4}{4}+\binom{n_3}{3}$ then
$\alpha_6-\alpha_5\leq f_6(n-1)+f_5(n_4)+f_4(n_3)$ etc.

Let $g_k(x)=\binom{x}{k}-2\binom{x}{k-1}$, for $k=2,3$. We have the table:

\begin{table}[tbh]
\begin{tabular}{|l|l|l|l|l|l|l|l|l|l|}
\hline
x        & 1  & 2  & 3  & 4  & 5   & 6   & 7  & 8  & 9  \\ \hline
$g_3(x)$ & 0  & -2 & -5 & -8 & -10 & -10 & -7 & 0  & 12 \\ \hline
$g_2(x)$ & -2 & -3 & -3 & -2 & 0   & 3   & 7  & 12 & 18 \\ \hline
\end{tabular}
\end{table}

Since $\alpha_2\geq \binom{n-1}{2}+\binom{5}{1}$ and $n\geq 8$, from the above table it follows that
$$\alpha_3-2\alpha_2 \leq \max\{g_3(n-1)+g_2(n-2), g_3(n)\}=g_3(n)$$
Since $\beta_5^6\geq 0$ it follows that
$$2(\alpha_4-\alpha_3+\alpha_2) \leq \alpha_5 + (\alpha_3-2\alpha_2) + 5n-6.$$
From the above relations, it follows that
\begin{equation}\label{25}
\beta_4^4 = \alpha_4-\alpha_3+\alpha_2-n+1 \leq \frac{1}{2}(\alpha_5+3n) + \frac{1}{12}n(n-1)(n-8)-2.
\end{equation}

We consider several cases:
\begin{enumerate}
\item $n\geq 11$. From the table of values of $f_k(x)$'s, we deduce that \small
      $$f_6(n)-(\alpha_6-\alpha_5) \geq f_6(n)-(f_6(n-1)+f_5(n-2)+f_4(n-3)+f_3(n-4)+f_2(n-5))=n-6\geq 5,$$ \normalsize
		  and we are done.
\item $n=10$. Since $\alpha_5\geq \binom{9}{5}+\binom{5}{4}$, from the table of values of $f_k(x)$'s, we deduce
      than $$\alpha_6-\alpha_5\leq f_6(9)-4=-46<-42=f_6(10).$$
\item $n=9$. If $\alpha_5\geq \binom{8}{5}+\binom{6}{4}+\binom{4}{3}+\binom{3}{2}$ then, from the table of values of $f_k(x)$'s, we deduce
      $$\alpha_6-\alpha_5\leq f_6(8)-14=-42=f_6(9).$$
			Now, assume that $\alpha_5\leq \binom{8}{5}+\binom{6}{4}+\binom{4}{3}+\binom{2}{2}=76$. From \eqref{25} it follows that
		  $\beta_4^4\leq 55$. Also, $\alpha_5\geq \binom{8}{5} + \binom{5}{4}$. Therefore
			$$\beta_6^6 \leq \alpha_6-\alpha_5+55 \leq f_6(8)+f_5(5)+55=-32+55=23 \leq \binom{9-1}{6}=28.$$			
\item $n=8$. If $\alpha_5\geq \binom{7}{5}+\binom{6}{4}+\binom{4}{3}+\binom{2}{2}+\binom{1}{1}=42$ then, from the table of values of $f_k(x)$'s,
      we deduce that $\alpha_6-\alpha_5\leq -28=f_6(8)$ and we are done. 			
			Assume that $$38=\binom{7}{5}+\binom{6}{4}+\binom{3}{3}+\binom{2}{2}\leq \alpha_5\leq 41.$$ 
			From \eqref{25} it follows that	$\beta_4^4\leq 32$. Therefore
			$$\beta_6^6 \leq \alpha_6-\alpha_5+32 \leq f_6(7)+f_5(6)-3+32=-25+32=7 =\binom{8-1}{6}.$$
			If $\alpha_5\leq 37$ then, from \eqref{25} it follows that $\beta_4^5\leq 28$. Therefore, if $\alpha_5\geq \binom{7}{5}+\binom{5}{4}+\binom{4}{3}=30$ then
			$$\beta_6^6 \leq \alpha_6-\alpha_5+28 \leq -21+28 = 7.$$
			Now, if $\alpha_5\leq 29$ then  $\beta_4^4\leq 24$. It follows that
			$$\beta_6^6 \leq \alpha_6-\alpha_5+24 \leq f_6(7)+f_5(5)+24 = -18+24=6.$$
\end{enumerate}
Hence, the proof is complete.
\end{proof}

\begin{teor}\label{main2}
Let $I\subset S$ be a squarefree monomial ideal with $\hdepth(S/I)=6$. Then $\hdepth(I)\geq 6$.
\end{teor}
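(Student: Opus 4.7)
The plan is to assemble the theorem as an immediate corollary of the preceding four lemmas together with Lemma \ref{lem}. Essentially all the work has been carried out in \ref{b3q}, \ref{b46}, \ref{b56}, and \ref{b66}; what remains is the bookkeeping of reductions and the verification of the hypothesis of \ref{lem} for $q=6$.

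First I would dispose of the trivial cases using Theorem \ref{teo1}: if $I$ is principal, then $\hdepth(I)=n\geq\hdepth(S/I)=6$ and we are done. So we may assume $I$ is not principal, which gives $n\geq q+2=8$. Following the standard reduction from \cite[Remark 2.4]{bordi}, we may further assume $I\subset\mathfrak m^2$, so that $\alpha_0=1$ and $\alpha_1=n$, matching the running hypotheses of the four preceding lemmas.

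Next I would invoke Lemma \ref{lem} in the direction (2)$\Rightarrow$(1): to conclude $\hdepth(I)\geq\hdepth(S/I)=6$, it suffices to verify
\[
  \beta_k^{6} \leq \binom{n-7+k}{k} \quad\text{for all}\quad 3\leq k\leq 6.
\]
For $k=3$ this is exactly Lemma \ref{b3q}(1); for $k=4$ it is Lemma \ref{b46}; for $k=5$ it is Lemma \ref{b56}; for $k=6$ it is Lemma \ref{b66}. Stringing these four bounds together with Lemma \ref{lem} yields $\hdepth(I)\geq 6$, completing the proof.

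There is no genuine obstacle at the level of the theorem itself, since the entire computational burden is absorbed into the four auxiliary lemmas; the main conceptual point is simply to recognize that the list of required inequalities for $q=6$ is precisely $\{k=3,4,5,6\}$, each of which has already been established. The presentation should therefore be kept brief, merely citing the four lemmas and Lemma \ref{lem}.
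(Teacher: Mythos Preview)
Your proposal is correct and follows exactly the paper's approach: the paper's proof is a one-line invocation of Lemma \ref{lem} together with Lemmas \ref{b3q}(1), \ref{b46}, \ref{b56}, and \ref{b66}. Your additional paragraph on the reductions (non-principal, $I\subset\mathfrak m^2$) merely spells out the standing assumptions already set up before those lemmas, so the content is identical.
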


\begin{proof}
The conclusion follows from Lemma \ref{lem}, Lemma \ref{b3q} (the case $q=6$), Lemma \ref{b46}, Lemma \ref{b56} and Lemma \ref{b66}.
\end{proof}


\begin{lema}\label{79}
If $I\subset K[x_1,\ldots,x_9]$ is a squarefree monomial ideal with $\qdepth(S/I)=7$, 
then $\beta_k^7 \leq k+1$ for all $3\leq k\leq 7$.
\end{lema}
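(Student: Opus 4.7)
By Lemma \ref{lem} applied with $n=9$ and $q=7$, one has $\binom{n-q+k-1}{k}=\binom{k+1}{k}=k+1$, so the asserted bounds are jointly equivalent to $\qdepth(I)\geq 7$. The case $k=3$ is immediate from Lemma \ref{b3q}(2), which gives $\beta_3^7\leq \binom{n-5}{3}=4$. So only $k\in\{4,5,6,7\}$ needs work.

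As in the other lemmas we may assume $I\subset \me^2$, so $\alpha_0=1$ and $\alpha_1=9$. Unpacking the inequalities $\beta_j^7\geq 0$ for $j=2,\ldots,6$ produces
\begin{align*}
\alpha_2 &\geq 33, & \alpha_3 &\geq 5\alpha_2-100, & \alpha_4 &\geq 4\alpha_3-10\alpha_2+145,\\
\alpha_5 &\geq 3\alpha_4-6\alpha_3+10\alpha_2-114, & \alpha_6 &\geq 2\alpha_5-3\alpha_4+4\alpha_3-5\alpha_2+47.
\end{align*}
Together with the ceilings $\alpha_j\leq \binom{9}{j}$ and Lemma \ref{cord}, these force $\alpha_2\in\{33,34,35,36\}$ and leave only narrow windows for the subsequent $\alpha_j$'s.

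For $k=4,5,6$ the plan is to follow the templates of Lemmas \ref{b46}, \ref{b56}, and \ref{b66}: write the relevant $\alpha_{k-1}$ (or $\alpha_k$) in its Kruskal--Katona form, apply Lemma \ref{cord} to get the matching upper bound on $\alpha_k$ and lower bound on $\alpha_{k-2}$, substitute into the closed form of $\beta_k^7$, and verify $\beta_k^7\leq k+1$ by direct arithmetic in each resulting subcase. Because $n=9$ is fixed, no splitting on $n$ is needed, and each $\alpha_j$ admits only a handful of admissible Kruskal--Katona decompositions, so the analysis is finite and routine; for instance, for $k=4$ the lower bound from positivity and the upper bound from Kruskal--Katona pin $\alpha_4$ into a window of width at most $5$, making the target bound $\beta_4^7\leq 5$ straightforward to verify.

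For $k=7$ I would use the telescoping identity $\beta_d^d+\beta_{d-1}^{d-1}=\alpha_d$ (obtained by adding the alternating sums and observing that all terms with $j<d$ cancel), which specializes to $\beta_7^7=\alpha_7-\beta_6^6$. Since $\beta_6^6\geq 0$, this already yields $\beta_7^7\leq \alpha_7$, settling the easy subcase $\alpha_7\leq 8$. Otherwise $\alpha_7\geq 9$, and then Lemma \ref{cord}(1) applied iteratively in descending degree pins $\alpha_6,\alpha_5,\ldots,\alpha_2$ to be correspondingly large. The hard part will be combining these Kruskal--Katona shadow bounds with the positivity relations above to establish $\beta_6^6=\alpha_6-\alpha_5+\alpha_4-\alpha_3+\alpha_2-8\geq \alpha_7-8$; the difficulty comes from the alternating signs, which require simultaneously lower-bounding the positively signed terms (via the shadow inequalities) and upper-bounding the negatively signed terms (via Kruskal--Katona in the other direction together with the positivity bounds). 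Fortunately $\alpha_7\leq 36$ and its leading Kruskal--Katona term must be $\binom{8}{7}$ or $\binom{9}{7}$, so only a small number of subcases arise. The extremal configuration $\alpha_j=\binom{9}{j}$ for $j\leq 7$ (which gives $\beta_k^7=k+1$ for every $k$ simultaneously) shows that the bounds are sharp.
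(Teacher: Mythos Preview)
Your plan is correct and follows the same route as the paper: the case $k=3$ comes from Lemma~\ref{b3q}, and for $k=4,5,6,7$ one combines the positivity constraints $\beta_j^7\geq 0$ with Kruskal--Katona (Lemma~\ref{cord}) and an exhaustive finite case analysis on the admissible values of $\alpha_2,\alpha_3,\ldots$\,. Your observation that $\alpha_2\in\{33,34,35,36\}$ and that the remaining $\alpha_j$'s sit in narrow windows is exactly how the paper organizes the casework.

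Two remarks. First, for $k=7$ your identity $\beta_7^7=\alpha_7-\beta_6^6$ is correct, but the paper sharpens the trivial threshold: from $\alpha_7=\sum_{j=0}^7\beta_j^7$ and $\beta_0^7=1$, $\beta_1^7=2$ one gets $\beta_7^7\leq\alpha_7-3$, so one may assume $\alpha_7\geq 12$ rather than $\alpha_7\geq 9$. This cuts the subsequent casework noticeably, since $\alpha_7\geq 12$ already forces $\alpha_2\geq 35$ via iterated shadows. The paper then splits on $\alpha_3$ (and further on $\alpha_4$) rather than on $\alpha_7$ directly; in several subcases it uses the variant decomposition $\beta_7^7=(\alpha_7-\alpha_6)+\beta_5^5$, which pairs better with the available bounds than $\alpha_7-\beta_6^6$.

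Second, ``finite and routine'' undersells what is required: the paper's proof of this lemma occupies several pages of tightly argued subcases (for $k=5$ and $k=6$ in particular, one repeatedly re-feeds newly obtained bounds on $\alpha_4,\alpha_5$ back through $\beta_4^7\geq 0$ and $\beta_5^7\geq 0$ to shrink the windows further before the desired inequality closes). Your plan is sound, but be aware that each of $k=5,6,7$ needs this kind of iterative tightening, not just a single application of Lemma~\ref{cord}.
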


\begin{proof}
The case $k=3$ follows from Lemma \ref{b3q}. Since $\qdepth(S/I)=7$ we have:
\begin{align*}
& \beta_2^7 = \alpha_2-33\geq 0,\\
& \beta_3^7 = \alpha_3-5\alpha_2+100 \geq 0, \\
& \beta_4^7 = \alpha_4-4\alpha_3+10\alpha_2 - 145 \geq 0, \\
& \beta_5^7 = \alpha_5-3\alpha_4+ 6\alpha_3 - 10\alpha_2 + 114 \geq 0, \\
& \beta_6^7 = \alpha_6-2\alpha_5+3\alpha_4-4\alpha_3+5\alpha_2 - 47 \geq 0,\\
& \beta_7^7 = \alpha_7-\alpha_6+\alpha_5-\alpha_4+\alpha_3-\alpha_2+8\geq 0.
\end{align*}
On the other hand, from \eqref{betak} we can easily deduce that
$$\alpha_k=\sum_{j=0}^k \binom{7-j}{k-j} \beta_j^7\text{ for all }0\leq k\leq 7.$$
Since $\beta_0^7=1$ and $\beta_1^7=2$ it follows that 
 \begin{align*}
 & \alpha_2\geq \binom{7}{2}+2\cdot \binom{6}{1} = 33,\; 
  \alpha_3\geq \binom{7}{3}+2\cdot \binom{6}{2} = 65,\; 
  \alpha_4\geq \binom{7}{4}+2\cdot \binom{6}{3} = 75, \\
 & \alpha_5\geq \binom{7}{5}+2\cdot \binom{6}{4} = 51,\;
  \alpha_6\geq \binom{7}{6}+2\cdot \binom{6}{5} = 19,\;
  \alpha_7\geq \binom{7}{7}+2\cdot \binom{6}{6} = 3.
 \end{align*}
Moreover, since $\beta_j^7 \geq 0$ for all $2\leq j\leq 7$, in order to deduce that $\beta_4^7\leq 5$ we
can assume 
$$\alpha_4\geq 81=\binom{8}{4}+\binom{5}{3}+\binom{2}{2}.$$ 
Indeed, if $\alpha_4\leq 80$ then 
$$\beta_4^7 = \alpha_4 - \binom{7}{3}\beta_0^7 - \binom{6}{2}\beta_1^7 - \binom{5}{1}\beta_2^7 \leq \alpha_4-75\leq 5,$$
as required. Similarly, in order to prove that $\beta_5^7\leq 6$ we can assume that 
$$\alpha_5\geq 58=\binom{8}{5}+\binom{4}{4}+\binom{3}{3}.$$
Also, in order to prove that $\beta_6^7\leq 7$ we can assume that $$\alpha_6\geq 27=\binom{8}{6}-1,$$ and, finally, in order to
prove that $\beta_7^7\leq 8$ we can assume that 
$$\alpha_7\geq 12=\binom{8}{7}+\binom{6}{6}+\binom{5}{5}+\binom{4}{4}+\binom{3}{3}.$$
We have several cases:
\begin{enumerate}
\item[(a)] $k=4$. Since $\beta_3^7\geq 0$ it follows that $\alpha_2\leq \frac{1}{5}\alpha_3+20.$
For $2\leq k\leq 4$ we let $f_k(x)=\binom{x}{k}-4\cdot\binom{x}{k-1}$. We have the table:

\begin{center}
\begin{table}[tbh]
\begin{tabular}{|l|l|l|l|l|l|l|l|l|l|}
\hline
$x$       & 1  & 2  & 3   & 4   & 5   & 6   & 7    & 8    & 9    \\ \hline
$f_4(x)$ & 0  & 0  & -4  & -15 & -35 & -65 & -105 & -154 & -210 \\ \hline
$f_3(x)$ & 0  & -4 & -11 & -20 & -30 & -40 & -49  &     &    \\ \hline
$f_2(x)$ & -4 & -7 & -9  & -10 & -10 & -9  &     &     &    \\ \hline
\end{tabular}
\end{table}
\end{center}

Since $\beta_4^7 = \alpha_4-4\alpha_3+10\alpha_2 - 145$, it is enough to show that 
$$\alpha_4-4\alpha_3+10(\alpha_2-36)\leq f_4(9)=-210,$$ 
in order to obtain $\beta_4^7\leq 5$. We consider the subcases:
\begin{enumerate}
\item If $\alpha_3\geq 80=\binom{8}{3}+\binom{7}{2}+\binom{3}{1}$ then $\alpha_4-4\alpha_3\leq f_4(8)+f_3(7)+f_2(3)=-212$ and we are done.
\item If $79\geq \alpha_3\geq 75 = \binom{8}{3} +\binom{6}{2}+\binom{4}{1}$ then $\alpha_2\leq 35$ and therefore
      $$\alpha_4-4\alpha_3-10 \leq -154-49-10 = -213.$$
\item If $74\geq \alpha_3\geq 70 = \binom{8}{3} +\binom{5}{2}+\binom{4}{1}$ then $\alpha_2\leq 34$ and therefore
      $$\alpha_4-4\alpha_3-20 \leq -154-40-20 = -214.$$
\item If $69\geq \alpha_3\geq 65 = \binom{8}{3} +\binom{4}{2}+\binom{3}{1}$ then $\alpha_2 = 33$ and therefore
      $$\alpha_4-4\alpha_3-30 \leq -154-29-30 = -213.$$
\end{enumerate}

\item[(b)] $k=5$. For $2\leq k\leq 5$ we let $f_k(x)=\binom{x}{k}-3\cdot\binom{x}{k-1}$. We have the table:


\begin{center}
\begin{table}[tbh]
\begin{tabular}{|l|l|l|l|l|l|l|l|l|l|}
\hline
$x$       & 1  & 2  & 3   & 4   & 5   & 6   & 7    & 8    & 9    \\ \hline
$f_5(x)$  & 0  & 0  & 0   & -3  & -14 & -39 & -84  & -154 & -252 \\ \hline
$f_4(x)$  & 0  & 0  & -3  & -11 & -25 & -45 & -70  &      &   \\ \hline
$f_3(x)$  & 0  & -3 & -8  & -14 & -20 & -25 &      &      &    \\ \hline
$f_2(x)$  & -3 & -5 & -6  & -6  & -5  &     &      &      &     \\ \hline
\end{tabular}
\end{table}
\end{center}

Since $\alpha_3\geq 5\alpha_2-100$, we have also the following table:

\begin{center}
\begin{table}[tbh]
\begin{tabular}{|l|l|l|l|l|}
\hline
$\alpha_2$                   & 33 & 34 & 35  & 36  \\ \hline
$\min(\alpha_3)$             & 65 & 70 & 75  & 80  \\ \hline
$\max(\alpha_3)$             & 66 & 71 & 77  & 84  \\ \hline
$\max(6\alpha_3-10\alpha_2)$ & 66 & 86 & 112 & 144 \\ \hline
\end{tabular}
\end{table}
\end{center}

Now, we consider the following subcases:
\begin{enumerate}
\item[(i)] $\alpha_2=36$. From this, we note that $6\alpha_3-10\alpha_2\leq 144$, so in order to obtain $\beta_5^7\leq 6$, it is enough to show that 
$$ \alpha_5-3\alpha_4 + 144 + 114 \leq 6 \Leftrightarrow \alpha_5-3\alpha_4 \leq - 252.$$
If $\alpha_4\geq \binom{8}{4}+\binom{7}{3}+\binom{6}{2}+\binom{1}{1} = 120$ then
$$\alpha_5-3\alpha_4 \leq f_5(8)+f_4(7)+f_3(6)+f_2(1) = -252,$$
and we are done. Hence, we may assume in the following that $75\leq \alpha_4\leq 119$.
Since 
$$\beta_4^7 = \alpha_4-4\alpha_3 + 360-145 = \alpha_4-4\alpha_3 + 215 \geq 0,$$
it follows that $\alpha_3\leq \frac{1}{4}(\alpha_4+215)$. Also, in order to prove that $\beta_5^7\leq 6$
it is enough to show that
\begin{equation}\label{pohta}
 \alpha_5-3\alpha_4 \leq 252-6\alpha_3. 
\end{equation}
Since $\alpha_4\leq 119$ then $\alpha_3\leq 83$ and $252-6\alpha_3\geq -246$.
If $\alpha_4\geq \binom{8}{4}+\binom{7}{3}+\binom{5}{2}+\binom{1}{1}=116$ then \eqref{pohta} is satisfied.
\begin{itemize}
\item Now, assume $\alpha_4\leq 115$. Then $\alpha_3\leq 82$ and $252-6\alpha_3\geq -240$. 
If $\alpha_4\geq \binom{8}{4}+\binom{7}{3}+\binom{4}{2}+\binom{1}{1}=112$ then $\alpha_5-3\alpha_4\leq -241$
and \eqref{pohta} is satisfied.
\item Now, assume $\alpha_4\leq 111$. Then $\alpha_3\leq 81$ and $252-6\alpha_3 \geq -234$. 
If $\alpha_4\geq \binom{8}{4}+\binom{7}{3}+\binom{3}{2}+\binom{1}{1}=109$ then $\alpha_5-3\alpha_4\leq -235$
and \eqref{pohta} is satisfied.
\item Now, assume $\alpha_4\leq 108$. Then $\alpha_3\leq 80$ and $252-6\alpha_3 \geq -228$. 
If $\alpha_4\geq \binom{8}{4}+\binom{7}{3}+\binom{2}{2}+\binom{1}{1}=107$ then $\alpha_5-3\alpha_4\leq -230$
and \eqref{pohta} is satisfied.
\item  Now, assume $\alpha_4=106$. If $\alpha_3\leq 79$ then $\beta_4^7 < 0$, so $\alpha_3=80$. Note that
       $\alpha_5\leq \binom{8}{5}+\binom{7}{4}=91$ and $\alpha_6\leq \binom{8}{6}+\binom{7}{5}=49$. If $\alpha_5=91$
			 then 
			 $$\beta_7^6 \leq 49 - 2\cdot 91 + 3\cdot 106 - 4\cdot 36 + 5\cdot 9 - 6 \leq -2,$$
			 a contradiction. It follows that $\alpha_5\leq 90$ and thus $\alpha_5-3\alpha_4\leq -227$. 
       On the other hand, as $106=\binom{8}{4}+\binom{7}{3}+\binom{2}{2}$ and $f_5(8)+f_4(7)+f_3(2)=-227$,
			 we are done.
\item  Now, assume $\alpha_4=105=\binom{8}{4}+\binom{7}{3}$. As above, we have $\alpha_5\leq 91$ and $\alpha_6\leq 49$.
       If $\alpha_5\geq 89$ then we get again a contradiction, hence we may assume $\alpha_5\leq 88$. 
			 If $\alpha_5=88=\binom{8}{5}+\binom{6}{4}+\binom{5}{3}+\binom{4}{2}+\binom{1}{1}$,
			 it follows that $\alpha_6\leq 43$. From this we get $\beta_6^7<0$, a contradiction. Thus $\alpha_5\leq 87$
			 and $\alpha_5-3\alpha_4\leq -224 = f_5(8)+f_4(7)$ and we are done.
\item Now, assume $\alpha_4\leq 104$. It follows that $\alpha_3\leq 79$, which means $\beta_3^7<0$, a contradiction.
\end{itemize}
\item[(ii)] $\alpha_2=35$. Since $6\alpha_3-10\alpha_2\leq 112$, in order to obtain $\beta_5^7\leq 6$, it is enough to show that 
$$ \alpha_5-3\alpha_4 + 112 + 114 \leq 6 \Leftrightarrow \alpha_5-3\alpha_4 \leq - 220.$$
If $\alpha_5\geq \binom{8}{4}+\binom{6}{3}+\binom{5}{2}+\binom{1}{1}=101$ then 
$$\alpha_5-3\alpha_4\leq f_5(8)+f_4(6)+f_3(5)+f_2(1)=-222,$$
and we are done. So, we may assume $\alpha_4\leq 100$. Since 
$$\beta_4^7 = \alpha_4-4\alpha_3 + 350-145 = \alpha_4-4\alpha_3 + 205 \geq 0,$$
it follows that $\alpha_3\leq \frac{1}{4}(\alpha_4+205)$. Also, in order to prove that $\beta_5^7\leq 6$
it is enough to show that
\begin{equation}\label{pohta2}
 \alpha_5-3\alpha_4 \leq 242-6\alpha_3.
\end{equation}
Since $\alpha_4\leq 100$ it follows that $\alpha_3\leq \frac{305}{4}$ that is $\alpha_3\leq 76$ 
and $242-6\alpha_3 \geq -214$. If $\alpha_4\geq \binom{8}{4}+\binom{6}{3}+\binom{4}{2}+\binom{1}{1}=97$ then
$$\alpha_5-3\alpha_4\leq f_5(8)+f_4(6)+f_3(4)+f_2(1)=-216,$$
and we are done by \eqref{pohta2}.

Assume $\alpha_4\leq 96$. It follows that $\alpha_3=75$ and $242-6\alpha_3 = -208$.
If $\alpha_4 \geq \binom{8}{4}+\binom{6}{3}+\binom{3}{2}+\binom{1}{1}=94$ then 
$$\alpha_5-3\alpha_4\leq f_5(8)+f_4(6)+f_3(3)+f_2(1)=-210,$$
and we are done by \eqref{pohta2}.

Now, assume $\alpha_4\leq 93$. If follows that $\alpha_3\leq 74$, a contradiction.

\item[(ii)] $\alpha_2=34$. Since $6\alpha_3-10\alpha_2\leq 86$, in order to obtain $\beta_5^7\leq 6$, it is enough to show that 
$$ \alpha_5-3\alpha_4 + 86 + 114 \leq 6 \Leftrightarrow \alpha_5-3\alpha_4 \leq - 192.$$
If $\alpha_4\geq \binom{8}{4}+\binom{5}{3}+\binom{3}{2}+\binom{2}{1}=85$, then 
$$\alpha_5-3\alpha_4\leq f_5(8)+f_4(5)+f_3(3)+f_2(2)=-192, $$
and we are done. Hence, we may assume $\alpha_4\leq 84$. Since 
$$\beta_4^7 = \alpha_4-4\alpha_3 + 340-145 = \alpha_4-4\alpha_3 + 195 \geq 0,$$
it follows that $\alpha_3\leq \frac{1}{4}(\alpha_4+195)$. 
Since $\alpha_4\leq 84$ it follows that $\alpha_3\leq 69$, which gives a contradiction.

\item[(iii)] $\alpha_2=33$. Since $6\alpha_3-10\alpha_2\leq 66$, in order to obtain $\beta_5^7\leq 6$, it is enough to show that 
$$ \alpha_5-3\alpha_4 + 66 + 114 \leq 6 \Leftrightarrow \alpha_5-3\alpha_4 \leq - 172.$$
If $\alpha_4\geq \binom{8}{4}+\binom{4}{3}+\binom{3}{2}=77$ then $\alpha_5-3\alpha_4 \leq f_5(8)+f_4(4)+f_3(3)=-173$ and we are done.
It remains to consider the case $\alpha_4\in\{75,76\}$. On the other hand, we have
$$\alpha_3\leq \frac{1}{4}(\alpha_4+185) \leq \frac{261}{4},$$
and therefore $\alpha_3=65$. Also, since $\alpha_4\in\{75,76\}$, we have $\alpha_5\leq \binom{8}{5}+\binom{4}{4}=57$. Hence
$$\beta_5^7 \leq 57 - 3\cdot 75 + 6\cdot 65 - 10\cdot 33 + 15\cdot 9 - 21 = 6=5+1.$$
\end{enumerate}

\item[(c)] $k=6$. For $2\leq k\leq 6$ we let $f_k(x)=\binom{x}{k}-2\cdot\binom{x}{k-1}$. We have the table:

\begin{center}
\begin{table}[tbh]
\begin{tabular}{|l|l|l|l|l|l|l|l|l|l|}
\hline
$x$      & 1  & 2  & 3   & 4   & 5   & 6   & 7    & 8    & 9    \\ \hline
$f_6(x)$ & 0  & 0  & 0   & 0   & -2  & -11 & -35  &  -84 & -168  \\ \hline
$f_5(x)$ & 0  & 0  & 0   & -2  & -9  & -24 &  -49 &      & \\ \hline
$f_4(x)$ & 0  & 0  & -2  & -7  & -15 & -25 &      &      & \\ \hline
$f_3(x)$ & 0  & -2 & -5  & -8  & -10 &     &      &      & \\ \hline
$f_2(x)$ & -2 & -3 & -3  & -2  &     &     &      &      & \\ \hline
\end{tabular}
\end{table}
\end{center}

We claim that 
\begin{equation}\label{3a4}
3\alpha_4-4\alpha_3\leq 3\cdot \binom{9}{4}-4\cdot \binom{9}{3}=42.
\end{equation}
Indeed, assume $\alpha_3<\binom{9}{3}$. Since $\alpha_3\geq 65 >\binom{8}{3}$, we can assume that $\alpha_3=\binom{8}{3}+\binom{n_2}{2}+\binom{n_1}{1}$
for $7\leq n_2\geq 2$ and $0\leq n_1 < n_2$. It follows that $\alpha_4\leq \binom{8}{4}+\binom{n_2}{3}+\binom{n_1}{2}$ and therefore
$$3\alpha_4-4\alpha_3 = -14 + \frac{1}{2}n_2(n_2-1)(n_2-6)+\frac{1}{2}n_1(3n_1-11)\leq -14 + 42 = 28 < 42,$$
hence the claim is true. From \eqref{3a4} it follows that, in order to obtain $\beta_6^7\leq 7$, it is enough to show that 
$$\alpha_6-2\alpha_5\leq -168.$$
If $\alpha_5\geq \binom{8}{5}+\binom{7}{4}+\binom{6}{3}+\binom{4}{2}+\binom{1}{1}=118$ then 
$$\alpha_6-2\alpha_5 \leq f_6(8)+f_5(7)+f_4(6)+f_3(4)+f_2(1)=-168,$$
and we are done. Thus, we can assume $\alpha_5\leq 117$. 

Since $\beta_5^7 = \alpha_5 - 3\alpha_4 + 6\alpha_3 - 10\alpha_2 + 114 \geq 0$ it follows that
\begin{equation}\label{cucucu}
3\alpha_4\leq \alpha_5 +  6\alpha_3 - 10\alpha_2 + 114.
\end{equation}
Also, in order to prove that $\beta_6^7\leq 7$ we need to show that
\begin{equation}\label{cucucu2}
\alpha_6-2\alpha_5 \leq -3\alpha_4+4\alpha_3-5\alpha_2+54.
\end{equation}
We consider several subcases:
\begin{enumerate}
\item[(i)] If $\alpha_2=36$ then $80\leq \alpha_3\leq 84$.       
			\begin{itemize}
			\item $\alpha_3=84$. Since $\beta_4^7\geq 0$ it follows that $\alpha_4\geq 121$.
			      Also $\alpha_4\leq \frac{1}{3}(\alpha_5+258)$, and, in particular $\alpha_5\geq 105$.
					  Moreover, from \eqref{cucucu2}, in order to prove that $\beta_6^7\leq 7$ it is
						enough to show that $$\alpha_6-2\alpha_5 \leq 210-3\alpha_4.$$						
			      Since $\alpha_5\leq 117$ it follows that $\alpha_4\leq 125$. Hence, in order to prove that 
			      $\beta_6^7\leq 7$, it suffice to show that $\alpha_6-2\alpha_5\leq -165$.
		      	If $\alpha_5\geq \binom{8}{5}+\binom{7}{4}+\binom{6}{3}+\binom{3}{2}+\binom{1}{1}=115$ then
		      	$$\alpha_6-2\alpha_5\leq f_6(8)+f_5(4)+f_4(6)+f_3(3)+f_2(1)=-165,$$
			      and we are done. So we can assume $\alpha_5\leq 114$ and thus $\alpha_4\leq 124$.
			      This implies that it is enough to show that $\alpha_6-2\alpha_5\leq -162$. This is true
			      for $\alpha_5\geq 113$. Now, if $\alpha_5\leq 112$ then $\alpha_4\leq 123$ and it is
			      enough to show that $\alpha_6-2\alpha_5\leq -159$. For $\alpha_5\geq 110$ this inequality holds.
						We assume $\alpha_5\leq 109$ which means $\alpha_4\leq 122$ and it is enough to show $\alpha_6-2\alpha_5\leq -156$.
						This is also true for $\alpha_5\geq 107$. Finally, if $\alpha_5\in \{105,106\}$ then $\alpha_4=121$ and
						it is enough to show that $\alpha_6-2\alpha_5\leq -153$, which is true.
						
			\item $\alpha_3=83$. Since $\beta_4^7\geq 0$ it follows that $\alpha_4\geq 117$. On the other hand, since 
			      $\alpha_3=\binom{8}{3}+\binom{7}{2}+\binom{6}{1}$ it follows that $\alpha_4\leq \binom{8}{4}+\binom{7}{3}+\binom{6}{2}=120$
						and $\alpha_5\leq \binom{8}{5}+\binom{7}{4}+\binom{6}{3}=111$. From \eqref{cucucu} it follows that
						$$\alpha_4\leq \frac{1}{3}(\alpha_5+252)\text{ and thus }\alpha_4\leq 121.$$						
						Also, from \eqref{cucucu2}, $\beta_6^7 \leq 7$ it is implied by 
						$$\alpha_6-2\alpha_5 \leq 206-3\alpha_4.$$
						Now, as $\alpha_5\leq 111$ it follows that $\alpha_4\leq 121$ and thus it is enough to show that $\alpha_6-2\alpha_5 \leq -157$.	
						If $\alpha_5\geq \binom{8}{5}+\binom{7}{4}+\binom{5}{3}+\binom{4}{2}+\binom{1}{1}=108$ then
						$$\alpha_6-2\alpha_5\leq f_6(8)+f_5(7)+f_4(5)+f_3(4)+f_2(1)=-158,$$
						as required. So we can assume $\alpha_5\leq 107$. This implies $\alpha_4\leq 119$	and thus it is enough to show that
						$\alpha_6-2\alpha_5 \leq -151$. 
						For $\alpha_5\geq \binom{8}{5}+\binom{7}{4}+\binom{5}{3}+\binom{2}{2}+\binom{1}{1}=103$, the above condition holds,
						so we may assume $\alpha_5\leq 102$. It follows that $\alpha_4\leq 118$ and it is enough to show that
						$\alpha_6-2\alpha_5 \leq -148$. For $\alpha_5=102$, this condition holds, so we may assume $\alpha_5\leq 101$ which
						implies $\alpha_4=117$. Also, it is enough to show that $\alpha_6-2\alpha_5 \leq -145$, which is satisfied for $\alpha_5\geq 99$.
						Now, assume $\alpha_5\leq 98$. It follows that $\alpha_4\leq 116$, a contradiction.	
											
			\item $\alpha_3=82$. Since $\beta_4^7\geq 0$ it follows that $\alpha_4\geq 113$. On the other hand, since 
			      $\alpha_3=\binom{8}{3}+\binom{7}{2}+\binom{5}{1}$ it follows that $\alpha_4\leq \binom{8}{4}+\binom{7}{3}+\binom{5}{2}=115$
						and $\alpha_5\leq \binom{8}{5}+\binom{7}{4}+\binom{5}{3}=101$.  From \eqref{cucucu} it follows that
						$$\alpha_4\leq \frac{1}{3}(\alpha_5+246)\text{ and thus }\alpha_4\leq 115.$$
						Also, from \eqref{cucucu2}, $\beta_6^7 \leq 7$ it is implied by 
						$$\alpha_6-2\alpha_5 \leq 202-3\alpha_4.$$
						Now, as $\alpha_5\leq 101$ it follows that $\alpha_4\leq 115$ and thus it is enough to show that 
						$\alpha_6-2\alpha_5 \leq -143$. For $\alpha_5\geq \binom{8}{5}+\binom{7}{4}+\binom{4}{3}+\binom{2}{2}+\binom{1}{1}=97$
						we have $$\alpha_6-2\alpha_5\leq f_6(8)+f_5(7)+f_4(4)+f_3(2)+f_2(1)=-144,$$
						and we are done. Assume $\alpha_5\leq 96$. Then $\alpha_4\leq 114$ and it is enough to to show that
						$\alpha_6-2\alpha_5 \leq -140$. This holds for $\alpha_5\geq 95$ so we may assume $\alpha_5\leq 94$. This implies
						$\alpha_4=113$ and it is enough to show that $\alpha_6-2\alpha_5 \leq -137$. This is true 
						for $\alpha_5\geq  \binom{8}{5}+\binom{7}{4}+\binom{3}{3}+\binom{2}{2}=93$. If $\alpha_5\leq 92$ then $\alpha_4\leq 112$,
						a contradiction.

			\item $\alpha_3=81$. Since $\beta_4^7\geq 0$ it follows that $\alpha_4\geq 109$. On the other hand, since 
			      $\alpha_3=\binom{8}{3}+\binom{7}{2}+\binom{4}{1}$ it follows that $\alpha_4\leq \binom{8}{4}+\binom{7}{3}+\binom{4}{2}=111$
						and $\alpha_5\leq \binom{8}{5}+\binom{7}{4}+\binom{4}{3}=95$.  From \eqref{cucucu} it follows that
						$$\alpha_4\leq \frac{1}{3}(\alpha_5+240)\text{ and thus }\alpha_4\leq 111.$$
						Also, from \eqref{cucucu2}, $\beta_6^7 \leq 7$ it is implied by 
						$$\alpha_6-2\alpha_5 \leq 198-3\alpha_4.$$
						Now, as $\alpha_5\leq 95$ and $\alpha_4\leq 111$, it is enough to show that $\alpha_6-2\alpha_5\leq -135$.
						This is true for $\alpha_5\geq \binom{8}{5}+\binom{7}{4}+\binom{3}{3}=92$.
						If $\alpha_5\leq 91$ then $\alpha_4\leq 110$ and it suffices to show that $\alpha_6-2\alpha_5\leq -132$.
						This is true for $\alpha_5\geq \binom{8}{5}+\binom{6}{4}+\binom{5}{3}+\binom{4}{2}+\binom{1}{1}=88$.
						If $\alpha_5\leq 87$ then $\alpha_4=109$ and it suffices to show that $\alpha_6-2\alpha_5\leq -129$.
						This is true for $\alpha_5\geq \binom{8}{5}+\binom{6}{4}+\binom{5}{3}+\binom{3}{2}+\binom{1}{1}=85$.
						If $\alpha_5\leq 84$ then $\alpha_4\leq 108$, a contradiction.
			
			\item $\alpha_3=80$. Since $\beta_4^7\geq 0$ it follows that $\alpha_4\geq 105$. On the other hand, since 
			      $\alpha_3=\binom{8}{3}+\binom{7}{2}+\binom{3}{1}$ it follows that $\alpha_4\leq \binom{8}{4}+\binom{7}{3}+\binom{3}{2}=108$
						and $\alpha_5\leq \binom{8}{5}+\binom{7}{4}+\binom{3}{3}=92$.  From \eqref{cucucu} it follows that
						$$\alpha_4\leq \frac{1}{3}(\alpha_5+234)\text{ and thus }\alpha_4 \leq 108.$$
						Also, from \eqref{cucucu2}, $\beta_6^7 \leq 7$ it is implied by 
						$$\alpha_6-2\alpha_5 \leq 194-3\alpha_4.$$
						Since $\alpha_5\leq 92$ and $\alpha_4\leq 108$ it is enough to show that $\alpha_6-2\alpha_5 \leq -130$.
						This holds for $\alpha_5\geq 85$, so we may assume $\alpha_5\leq 84$. It follows that $\alpha_4\leq 106$ 
						and it is enough to show that $\alpha_6-2\alpha_5 \leq -124$. This is true for 
						$\alpha_5\geq \binom{8}{5}+\binom{6}{4}+\binom{5}{3}+\binom{2}{2}=83$. If $\alpha_5\leq 82$ then $\alpha_4=105$
						and it is enough to show that $\alpha_6-2\alpha_5 \leq -121$. This holds for
						$\alpha_5\geq \binom{8}{5}+\binom{6}{4}+\binom{4}{3}+\binom{3}{2}+\binom{1}{1}=80$.
						Now, if $\alpha_5\leq 79$ then $\alpha_4\leq 104$, a contradiction.
			\end{itemize}

\item[(ii)] If $\alpha_2=35$ then $75\leq \alpha_3\leq 77$.
      \begin{itemize}
			\item $\alpha_3=77$. Since $\beta_4^7\geq 0$ it follows that $\alpha_4\geq 103$.
			      On the other hand, since $\alpha_3=\binom{8}{3}+\binom{7}{2}$ it follows that 
						$\alpha_4\leq \binom{8}{4}+\binom{7}{3}=105$
						and $\alpha_5\leq \binom{8}{5}+\binom{7}{4}=91$. From \eqref{cucucu} it follows that
						$$\alpha_4\leq \frac{1}{3}(\alpha_5+226)\text{ and thus }\alpha_4 \leq 105.$$
						Also, from \eqref{cucucu2}, $\beta_6^7 \leq 7$ it is implied by 
						$$\alpha_6-2\alpha_5 \leq 187-3\alpha_4.$$
						Since $\alpha_5\leq 91$ and $\alpha_4\leq 105$ it is enough to show that $\alpha_6-2\alpha_5 \leq -128$.
						If $\alpha_5\geq \binom{8}{5}+\binom{6}{4}+\binom{5}{3}+\binom{3}{2}=84$ then this condition holds.
						Assume $\alpha_5\leq 83$. We have $\alpha_4 =103 $ and it suffices to show that $\alpha_6-2\alpha_5 \leq -122$.
						This is true for $\alpha_5\geq \binom{8}{5}+\binom{6}{4}+\binom{4}{3}+\binom{3}{2}+\binom{1}{1}=79$.
						Now, if $\alpha_5\leq 79$ then $\alpha_4\leq 101$, a contradiction.
					
			\item $\alpha_3=76$. Since $\beta_4^7\geq 0$ it follows that $\alpha_4\geq 99$.
			      On the other hand, since $\alpha_3=\binom{8}{3}+\binom{6}{2}+\binom{5}{1}$ it follows that 
						$\alpha_4\leq \binom{8}{4}+\binom{6}{3}+\binom{5}{2}=100$
						and $\alpha_5\leq \binom{8}{5}+\binom{6}{4}+\binom{5}{3}=81$. From \eqref{cucucu} it follows that
						$$\alpha_4\leq \frac{1}{3}(\alpha_5+220)\text{ and thus }\alpha_4 \leq 100.$$
						Also, from \eqref{cucucu2}, $\beta_6^7 \leq 7$ it is implied by 
						$$\alpha_6-2\alpha_5 \leq 183-3\alpha_4.$$
						Since $\alpha_5\leq 81$ and $\alpha_4\leq 100$ it is enough to show that $\alpha_6-2\alpha_5 \leq -117$.
						This is true for $\alpha_5\geq \binom{8}{5}+\binom{6}{4}+\binom{4}{3}+\binom{2}{2}=76$.
						If $\alpha_5=75$ then $\alpha_4\leq 98$, a contradiction.
			
			\item $\alpha_3=75$. Since $\beta_4^7\geq 0$ it follows that $\alpha_4\geq 95$.
			      On the other hand, since $\alpha_3=\binom{8}{3}+\binom{6}{2}+\binom{4}{1}$ it follows that 
						$\alpha_4\leq \binom{8}{4}+\binom{6}{3}+\binom{4}{2}=96$
						and $\alpha_5\leq \binom{8}{5}+\binom{6}{4}+\binom{4}{3}=75$. From \eqref{cucucu} it follows that
						$$\alpha_4\leq \frac{1}{3}(\alpha_5+214)\text{ and thus }\alpha_4 \leq 96.$$
						Also, from \eqref{cucucu2}, $\beta_6^7 \leq 7$ it is implied by 
						$$\alpha_6-2\alpha_5 \leq 179-3\alpha_4.$$
						Since $\alpha_5\leq 75$ and $\alpha_4\leq 96$ it is enough to show that $\alpha_6-2\alpha_5 \leq -109$.
						This is true for $\alpha_5\geq \binom{8}{5}+\binom{6}{4}+\binom{3}{3}=72$. If $\alpha_5\leq 71$ then $\alpha_4=95$
						and it is enough to show that $\alpha_6-2\alpha_5 \leq -106$. This holds for
						$\alpha_5\geq \binom{8}{5}+\binom{5}{4}+\binom{4}{3}+\binom{3}{2}+\binom{1}{1}=69$.
						If $\alpha_5\leq 68$ then $\alpha_4\leq 94$, a contradiction.				
			\end{itemize}

\item[(iii)] If $\alpha_2=34$ then $70\leq \alpha_3\leq 71$.
      \begin{itemize}
			\item $\alpha_3=71$. Since $\beta_4^7\geq 0$ it follows that $\alpha_4\geq 89$.
			      On the other hand, since $\alpha_3=\binom{8}{3}+\binom{6}{2}$ it follows that 
						$\alpha_4\leq \binom{8}{4}+\binom{6}{3}=90$
						and $\alpha_5\leq \binom{8}{5}+\binom{6}{4}=71$.  From \eqref{cucucu} it follows that
						$$\alpha_4\leq \frac{1}{3}(\alpha_5+200)\text{ and thus }\alpha_4 \leq 90.$$
						Also, from \eqref{cucucu2}, $\beta_6^7 \leq 7$ it is implied by 
						$$\alpha_6-2\alpha_5 \leq 168-3\alpha_4.$$
						Since $\alpha_5\leq 71$ and $\alpha_4\leq 90$ it is enough to show that $\alpha_6-2\alpha_5 \leq -102$.
						This is true for $\alpha_5\geq \binom{8}{5}+\binom{5}{4}+\binom{4}{3}+\binom{2}{2}=66$. If 
						$\alpha_5\leq 65$ then $\alpha_4\leq 88$, a contradiction.
						
			\item $\alpha_3=70$. Since $\beta_4^7\geq 0$ it follows that $\alpha_4\geq 85$.
			      On the other hand, since $\alpha_3=\binom{8}{3}+\binom{5}{2}+\binom{4}{1}$ it follows that 
						$\alpha_4\leq \binom{8}{4}+\binom{5}{3}+\binom{4}{2}=86$
						and $\alpha_5\leq \binom{8}{5}+\binom{5}{4}+\binom{4}{3}=65$. From \eqref{cucucu} it follows that
						$$\alpha_4\leq \frac{1}{3}(\alpha_5+194)\text{ and thus }\alpha_4 \leq 86.$$
						Also, from \eqref{cucucu2}, $\beta_6^7 \leq 7$ it is implied by 
						$$\alpha_6-2\alpha_5 \leq 164-3\alpha_4.$$
						Since $\alpha_5\leq 65$ and $\alpha_4\leq 86$ it is enough to show that $\alpha_6-2\alpha_5 \leq -94$.
						This is true for $\alpha_5\geq \binom{8}{5}+\binom{5}{4}+\binom{3}{3}=62$. If $\alpha_5\leq 61$ then $\alpha_4=85$
						and it suffices to show that $\alpha_6-2\alpha_5 \leq -91$. This holds for $\alpha_5\geq 60$.
						If $\alpha_5\leq 59$ then $\alpha_4\leq 84$, a contradiction.
			\end{itemize}
			
\item[(iii)] If $\alpha_2=33$ then $65\leq \alpha_3\leq 66$.
      \begin{itemize}
			\item $\alpha_3=66$. Since $\beta_4^7\geq 0$ it follows that $\alpha_4\geq 79$.
			      On the other hand, since $\alpha_3=\binom{8}{3}+\binom{5}{2}$ it follows that 
						$\alpha_4\leq \binom{8}{4}+\binom{5}{3}=80$
						and $\alpha_5\leq \binom{8}{5}+\binom{5}{4}=61$. From \eqref{cucucu} it follows that
						$$\alpha_4\leq \frac{1}{3}(\alpha_5+180)\text{ and thus }\alpha_4 \leq 80.$$
						Also, from \eqref{cucucu2}, $\beta_6^7 \leq 7$ it is implied by 
						$$\alpha_6-2\alpha_5 \leq 153-3\alpha_4.$$
						Since $\alpha_5\leq 61$ and $\alpha_4\leq 80$ it is enough to show that $\alpha_6-2\alpha_5 \leq -87$.
						This is true for $\alpha_5\geq 58$. Assume $\alpha_5\leq 57$. Then $\alpha_4=79$ and it suffices
						to show $\alpha_6-2\alpha_5 \leq -84$, which is true for $\alpha_5\geq 56$. If $\alpha_5\leq 55$ then
						$\alpha_4\leq 78$, a contradiction.
						
			\item $\alpha_3=65$. Since $\beta_4^7\geq 0$ it follows that $\alpha_4\geq 75$.
			      On the other hand, since $\alpha_3=\binom{8}{3}+\binom{4}{2}+\binom{3}{1}$ it follows that 
						$\alpha_4\leq \binom{8}{4}+\binom{4}{3}+\binom{3}{2}=77$
						and $\alpha_5\leq \binom{8}{5}+\binom{4}{4}+\binom{3}{3}=58$. From \eqref{cucucu} it follows that
						$$\alpha_4\leq \frac{1}{3}(\alpha_5+174)\text{ and thus }\alpha_4 \leq 77.$$
						Also, from \eqref{cucucu2}, $\beta_6^7 \leq 7$ it is implied by 
						$$\alpha_6-2\alpha_5 \leq 149-3\alpha_4.$$
						Since $\alpha_5\leq 58$ and $\alpha_4\leq 77$ it is enough to show that $\alpha_6-2\alpha_5 \leq -82$.
						This is true for $\alpha_5\geq \binom{7}{5}+\binom{6}{4}+\binom{5}{3}+\binom{4}{2}=52$. If $\alpha_5\leq 51$
						then $\alpha_4=75$ and it is enough to show that $\alpha_6-2\alpha_5 \leq -76$. This is true for
						$\alpha_5\geq \binom{7}{5}+\binom{6}{4}+\binom{5}{3}+\binom{2}{2}=47$. On the other hand, $\alpha_5\geq 51$, so
						we are done.
			\end{itemize}
\end{enumerate}						

\item[(d)] $k=7$. For $2\leq k\leq 7$ we let $f_k(x)=\binom{x}{k}-\binom{x}{k-1}$. We have the table:
 \begin{center}
 \begin{table}[tbh]
 \begin{tabular}{|l|l|l|l|l|l|l|l|l|l|}
 \hline
 $x$      & 1  & 2  & 3  & 4  & 5  & 6  & 7   & 8    & 9    \\ \hline
 $f_7(x)$ & 0  & 0  & 0  & 0  & 0  & -1 & -6  & -20  & -48 \\ \hline
 $f_6(x)$ & 0  & 0  & 0  & 0  & -1 & -5 & -14 & -28  &   \\ \hline
 $f_5(x)$ & 0  & 0  & 0  & -1 & -4 & -9 &  -14 & -14  &  \\ \hline
 $f_4(x)$ & 0  & 0  & -1 & -3 & -5 & -5 & 0   &  14  & \\ \hline
 $f_3(x)$ & 0  & -1 & -2 & -2 & 0 & 5 &  14 & 28 &    \\ \hline
 $f_2(x)$ & -1 & -1 & 0  & 2  & 5 & 9 & 14 &  20 &    \\ \hline
 \end{tabular}
 \end{table}
 \end{center}

We recall that, in order to prove that $\beta_7^7\leq 8$, we can assume $\alpha_7\geq 12=\binom{8}{7}+\binom{6}{6}+\binom{5}{5}+\binom{4}{4}+\binom{3}{3}$.
This implies:
\begin{equation}\label{lime}
\alpha_2\geq 35,\;\alpha_3\geq 77,\;\alpha_4\geq 105,\;\alpha_5\geq 90,\;\alpha_6\geq 46.
\end{equation}
Using \eqref{lime}, we deduce that if $\alpha_2=35=\binom{8}{2}+\binom{7}{1}$ then
\begin{equation}\label{lime2}
\alpha_3=77,\; \alpha_4=105,\; \alpha_5\leq 91,\; \alpha_6\leq 49\text{ and }\alpha_7\leq 15.
\end{equation}
From \eqref{lime2} it follows that
$$\beta_7^7 \leq 15-46+91-105+77-35+9-1 = 5 \leq 8,$$
as required. Hence, we can assume $\alpha_2=36$ and $80\leq \alpha_3\geq 84$.
We consider several cases:
\begin{enumerate}
\item[(a)] $\alpha_3=80=\binom{8}{3}+\binom{7}{2}+\binom{3}{1}$. It follows that:
           $$\alpha_4\leq 108,\;\alpha_5\leq 92,\;\alpha_6\leq 49\text{ and }\alpha_7\leq 15.$$
					 Since, from \eqref{lime}, we have $\alpha_4\geq 105$ and $\alpha_6\geq 46$, it follows that
					 $$\beta_7^7 \leq 15-46+92-105+80-36+9-1= 8,$$
					 and we are done.
\item[(b)] $\alpha_3=81=\binom{8}{3}+\binom{7}{2}+\binom{4}{1}$. It follows that:
           $$\alpha_4\leq 111,\;\alpha_5\leq 95,\;\alpha_6\leq 50,\;\alpha_7\leq 15.$$					
					 On the other hand, since $\beta_4^7\geq 0$ it follows that $\alpha_4\geq 109$.					
					 Therefore
					 $$\beta_7^7 \leq 15-46+95-109+81-36+9-1= 8,$$
					 and we are done.
\item[(c)] $\alpha_3=82=\binom{8}{3}+\binom{7}{2}+\binom{5}{1}$. It follows that:
           $$\alpha_4\leq 115,\;\alpha_5\leq 101,\;\alpha_6\leq 54,\;\alpha_7\leq 16.$$
					 Since $\beta_4^7\geq 0$ it follows that $\alpha_4\geq 113$.
					 
					 If $\alpha_7=12$ then 				
					 $$\beta_7^7 \leq 12 - 46 + 101 - 113 + 82 - 36 + 9 - 1 = 8.$$
					 If $\alpha_7\in \{13,14\}$ then $\alpha_6\geq 48$ and, as above, we get $\beta_7^7 \leq 8$.
					
					 If $\alpha_7 = 15$ then $\alpha_6\geq 49$ and we obtain again $\beta_7^7\leq 8$. 
					
					 Finally, if $\alpha_7=16$ then $\alpha_6\geq 54$ and we are also done.	

\item[(d)] $\alpha_3=83=\binom{8}{3}+\binom{7}{2}+\binom{6}{1}$. It follows that:
           $$\alpha_4\leq 120,\;\alpha_5\leq 111,\;\alpha_6\leq 64,\;\alpha_7\leq 21.$$
					 Since $\beta_4^7\geq 0$ it follows that $\alpha_4\geq 117$. Note that 
					 $$\beta_5^5 \leq 111-117+83-36+9-1 = 49,$$
					 so, in order to prove that $\beta_7^7=(\alpha_7-\alpha_6)+\beta_5^5\leq 8$, it suffices
					 to show that $\alpha_7-\alpha_6\leq -41$. If $\alpha_6\geq \binom{8}{6}+\binom{7}{5}+\binom{5}{4}+\binom{3}{3}+\binom{2}{2}+\binom{1}{1}=57$
					 then $$\alpha_7-\alpha_6\leq f_7(8)+f_6(7)+f_5(5)+f_4(3)+f_3(2)+f_2(1)=-41,$$
					 and we are done. So, we may assume $\alpha_6\leq 57$. Since $\beta_6^7\geq 0$, it follows that
					 $$2\alpha_5 \leq 57 + 3\cdot 117 -4 \cdot 83 + 5\cdot 36 -6\cdot 9 + 7 = 209,$$
					 that is $\alpha_5\leq 104$. Therefore, $\beta_5^5\leq 42$, so, in order to prove that $\beta_7^7\leq 8$, it suffices to show
					 that $\alpha_7-\alpha_6\leq -34$. This holds for 
					 $\alpha_6\geq \binom{8}{6}+\binom{6}{5}+\binom{5}{4}+\binom{4}{3}+\binom{2}{2}+\binom{1}{1}=45,$
					 and we are done.

\item[(e)] $\alpha_4=84$. Since $\beta_4^7\geq 0$ it follows that $\alpha_4\geq 121$. 
            We have the following table:
											
						\begin{center}
						\begin{table}[tbh]
            \begin{tabular}{|l|l|l|l|l|l|l|}
						\hline
						$\alpha_4$                  & 121 & 122 & 123 & 124 & 125 & 126 \\ \hline
						$\max\{\alpha_5\}$          & 111 & 112 & 114 & 117 & 121 & 126 \\ \hline
						$\max\{\alpha_5-\alpha_4\}$ & -10 & -10 & -9  & -7  & -4  & 0   \\ \hline
						\end{tabular}
						\end{table}
            \end{center}
						
						Note that, in order to prove that $\beta_7^7\leq 8$, it is enough to show that 
						$$\alpha_7-\alpha_6\leq -48-\max\{\alpha_5-\alpha_4\}.$$
            We consider several subcases:
						\begin{itemize}
						\item $\alpha_4=121$. Since $\beta_5^7\geq 0$ it follows that $\alpha_5\geq 105$.						
						      We have to show that $\alpha_7-\alpha_6\leq -38$. This is true for
									$\alpha_6\geq \binom{8}{6}+\binom{7}{5}+4=53$, so we may assume that $\alpha_6\leq 52$.
									Now, since $\alpha_6\leq 52$ and $\beta_6^7\geq 0$ it follows that $\alpha_5\leq 106$.
									Since $\alpha_5-\alpha_4\leq -15$, we have to show that $\alpha_7-\alpha_6\leq -33$.
									This is true for $\alpha_6\geq \binom{8}{6}+\binom{6}{5}+\binom{5}{4}+\binom{4}{3}+\binom{2}{2}=44$,
									so we are done.
       			\item $\alpha_4=122$. Since $\beta_5^7\geq 0$ it follows that $\alpha_5\geq 108$.
						      As in the previous case, we have to show that $\alpha_7-\alpha_6\leq -38$, so we can assume 
									that $\alpha_6\leq 52$. It follows that 
									$$\beta_6^7 \leq 52 - 2\cdot 108 + 3\cdot 122 - 4\cdot 84 + 5\cdot 36 - 6\cdot 9 + 7 =-1,$$
									a contradiction.
									
						\item $\alpha_4=123$. Since $\beta_5^7\geq 0$ it follows that $\alpha_5\geq 111$.
						      We have to show $\alpha_7-\alpha_6\leq -39$. This is true for
									$\alpha_6\geq \binom{8}{6}+\binom{7}{5}+\binom{5}{4}+\binom{3}{3}=55$.
									Now, assume $\alpha_6\leq 54$. It follows that 
									$$\beta_6^7 \leq 54 - 2\cdot 111 + 3\cdot 123 - 4\cdot 84 + 5\cdot 36 - 6\cdot 9 + 7 =-2,$$
									a contradiction.
									
						\item $\alpha_4=124$. Since $\beta_5^7\geq 0$ it follows that $\alpha_5\geq 114$.
						      We have to show $\alpha_7-\alpha_6\leq -41$. This is true for
									$\alpha_6\geq \binom{8}{6}+\binom{7}{5}+\binom{5}{4}+\binom{3}{3}+\binom{2}{2}+\binom{1}{1}=57$.
									Thus we can assume $\alpha_6\leq 56$. It follows that 
									$$\beta_6^7 \leq 56 - 2\cdot 114 + 3\cdot 124 - 4\cdot 84 + 5\cdot 36 - 6\cdot 9 + 7 =-3,$$
									a contradiction.
						
            \item $\alpha_4=125$. Since $\beta_5^7\geq 0$ it follows that $\alpha_5\geq 117$.
						      We have to show $\alpha_7-\alpha_6\leq -44$. This is true for 
									$\alpha_6\geq \binom{8}{6}+\binom{7}{5}+\binom{5}{4}+\binom{4}{3}+\binom{3}{2}+\binom{2}{1}=63$ 									
                  Thus, we can assume $\alpha_6\leq 62$. It follows that 
									$$\beta_6^7 \leq 62 - 2\cdot 117 + 3\cdot 125 - 4\cdot 84 + 5\cdot 36 - 6\cdot 9 + 7 =0.$$
									Therefore, if $\alpha_5>117$ or $\alpha_6<62$ then $\beta_6^7<0$, a contradiction. Thus $\alpha_5=117$ and 
									$\alpha_6=62=\binom{8}{6}+\binom{7}{5}+\binom{5}{4}+\binom{4}{3}+\binom{3}{2}+\binom{1}{1}$,
									and thus $\alpha_7\leq 8+7+3=18$. Hence $\beta_7^7\leq 4$ and we are done.								
						
            \item $\alpha_4=126$.	Since $\beta_5^7\geq 0$ it follows that $\alpha_5\geq 120$.														
						      We have to show $\alpha_7-\alpha_6\leq -48$. This is true for 
									$\alpha_6\geq \binom{8}{6}+\binom{7}{5}+\binom{6}{4}+\binom{4}{3}+\binom{2}{2}+\binom{1}{1}=66$ 									
                  Thus, we can assume $\alpha_6\leq 65$. It follows that 
									$$\beta_6^7 \leq 65 - 2\cdot 120 + 3\cdot 126 - 4\cdot 84 + 5\cdot 36 - 6\cdot 9 + 7 =0.$$
									Therefore $\alpha_5=120$ and $\alpha_6=65=\binom{8}{6}+\binom{7}{5}+\binom{6}{4}+\binom{4}{3}+\binom{2}{2}$.
									It follows that $\alpha_7\leq 8+7+6+1=22$. Hence $\beta_7^7\leq 7$ and we are done.
						\end{itemize}
\end{enumerate}
\end{enumerate}
Thus, the proof is complete.
\end{proof}

\begin{teor}\label{main3}
 Let $I\subset S=K[x_1,\ldots,x_n]$ be a squarefree monomial ideal, where $n\leq 9$. Then 
 $\hdepth(I)\geq \hdepth(S/I)$.
\end{teor}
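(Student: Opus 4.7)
The plan is to reduce the statement to the already-established lemmas by a case analysis on $q:=\qdepth(S/I)$, using the structural reductions recalled at the start of Section~2. First, I would invoke Theorem~\ref{teo1}: if $I$ is principal then $\qdepth(I)=n\geq n-1\geq \qdepth(S/I)$ and there is nothing to prove. Hence I may assume $I$ is not principal, which forces $q\leq n-2\leq 7$ under the hypothesis $n\leq 9$. The remaining task is to verify the inequality $\qdepth(I)\geq q$ case by case on $q\in\{0,1,\ldots,7\}$.

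For small values $q\leq 5$, the statement is already contained in \cite[Theorem 3.10]{bordi}, \cite[Theorem 3.15]{bordi}, and \cite[Theorem 4.4]{bordi}, so nothing new is needed. For $q=6$, the conclusion is exactly Theorem~\ref{main2}, whose proof collects Lemma~\ref{b3q}(1), Lemma~\ref{b46}, Lemma~\ref{b56}, Lemma~\ref{b66} together with the equivalence in Lemma~\ref{lem}. Thus these ranges are handled with no further calculation.

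The only genuinely new case that remains within the hypothesis $n\leq 9$ is $q=7$. Since $q\leq n-2$, we must have $n=9$ (the case $n=8$, $q=7$ would force $I$ principal, contradicting the assumption). Here I would apply Lemma~\ref{lem} with $q=7$ and $n=9$: it suffices to check that
\[
\beta_k^7 \leq \binom{n-q+k-1}{k} = \binom{k+1}{k} = k+1, \qquad 3\leq k\leq 7,
\]
and this is precisely the content of Lemma~\ref{79}. Combining these observations exhausts all possibilities.

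The main obstacle has already been overcome by the preparatory lemmas, most notably Lemma~\ref{79}, where the fine book-keeping among $\alpha_2,\ldots,\alpha_7$ is carried out; at the level of the theorem itself there is no further inequality to prove. I would therefore structure the write-up as a short, clean case split on $q$, routing $q\leq 5$ to the earlier paper, $q=6$ to Theorem~\ref{main2}, and $q=7$ (forcing $n=9$) to Lemma~\ref{lem} combined with Lemma~\ref{79}, with the principal/edge case $q=n-1$ handled at the very beginning via Theorem~\ref{teo1}.
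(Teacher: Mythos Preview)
Your proposal is correct and follows essentially the same approach as the paper's own proof: dispose of the principal case via Theorem~\ref{teo1}, route $q\leq 5$ to the earlier results in \cite{bordi}, handle $q=6$ by Theorem~\ref{main2}, and reduce the remaining case $q=7$ (which forces $n=9$) to Lemma~\ref{lem} combined with Lemma~\ref{79}. Your write-up is in fact slightly more explicit than the paper's in justifying why $q=7$ forces $n=9$.
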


\begin{proof}
If $I$ is principal, which according to Theorem \ref{teo1}, is equivalent to $q:=\qdepth(S/I)=n-1$, we
have, again, by Theorem \ref{teo1} that $\qdepth(I)=n$ and there is nothing to prove. Hence, we may assume that $q=\qdepth(S/I)\leq 6$.
If $q\leq 4$ we are done by \cite[Theorem 3.10]{bordi} and \cite[Theorem 3.15]{bordi}. For $q=5$, the conclusion follows from \cite[Theorem 4.4]{bordi}. Also, for $q=6$, the conclusion follows from Theorem \ref{main2}. Finally, if $q=7$ then $n=9$ and the conclusion follows from Lemma \ref{lem} and Lemma \ref{79}.
\end{proof}

\subsection*{Data availability}

Data sharing not applicable to this article as no data sets were generated or analyzed
during the current study.

\subsection*{Conflict of interest}

The authors have no relevant financial or non-financial interests to disclose.



\begin{thebibliography}{99}
\bibitem{bruns} W.\ Bruns, C.\ Krattenthaler, J.\ Uliczka, \emph{Stanley decompositions and Hilbert depth in the Koszul complex}, 
Journal of Commutative Algebra \textbf{vol.2, no.3} (2010), 327--357.

\bibitem{lucrare2} S.\ B\u al\u anescu, M.\ Cimpoea\c s, C.\ Krattenthaler, \textit{On the Hilbert depth of monomial ideals}, 
                   arXiv:2306.09450.pdfv4 (2024).
									
\bibitem{bordi} A.\ I.\ Bordianu, M.\ Cimpoea\c s, \emph{Comparing Hilbert depth of $I$ with Hilbert depth of $S/I$}, arXiv:2403.17078v2 (2024).

\bibitem{stanley} R.\ P.\ Stanley, \emph{Combinatorics and commutative algebra, 2nd edition}, Progress in mathematics 41, Birkh\"auser, 2004.

\bibitem{uli} J.\ Uliczka, \emph{Remarks on Hilbert series of graded modules over polynomial rings}, Manuscripta math. \textbf{132} (2010),
              159--168.
\end{thebibliography}
\end{document}